\theoremstyle{plain}
\newtheorem{theorem}{Theorem}
\newtheorem{lemma}{Lemma}
\newtheorem{proposition}{Proposition}
\newtheorem{corollary}{Corollary}
\theoremstyle{definition}
\newtheorem*{definition*}{Definition}
\newtheorem{example}{Example}
\newtheorem{remark}{Remark}
\def\modd{{\rm mod}}
\def\KK{{\mathbb K}}
\def\GG{{\mathbb G}}
\def\PP{{\mathbb P}}
\def\FF{{\mathbb F}}
\def\gg{\mathfrak{g}}
\def\gl{\mathfrak{gl}}
\def\mm{\mathfrak{m}}
\def\GL{{\rm GL}}
\def\Ann{{\rm Ann}}
\def\Mat{{\rm Mat}}
\def\SL{{\rm SL}}
\def\SO{{\rm SO}}
\def\Ker{{\rm Ker}\,}
\begin{document}
\sloppy

\title[Hassett-Tschinkel correspondence]
{Hassett-Tschinkel correspondence: Modality and projective hypersurfaces}
\author[Ivan Arzhantsev]{Ivan V. Arzhantsev}
\thanks{Supported by RFBR grants 09-01-99648- , 09-01-90416-Ukr-f-a, and
the Deligne 2004 Balzan Prize in Mathematics.}
\address{Department of Higher Algebra, Faculty of Mechanics and Mathematics,
Moscow State University, Leninskie Gory 1, GSP-1, Moscow, 119991, Russia }
\email{arjantse@mccme.ru}
\author[Elena Sharoyko]{Elena V. Sharoyko}
\address{Department of Higher Algebra, Faculty of Mechanics and Mathematics,
Moscow State University, Leninskie Gory 1, GSP-1, Moscow, 119991, Russia }
\email{sharojko@mccme.ru}
\begin{abstract}
In~1999, B.~Hassett and Yu.~Tschinkel introduced a remarkable correspondence between generically transitive actions of the commutative unipotent algebraic group $\GG_a^n$ and finite-dimensional local algebras. In this paper we develop Hassett-Tschinkel correspondence and calculate
modality of generically transitive $\GG_a^n$-actions on projective spaces, classify actions of modality one, and characterize generically transitive $\GG_a^n$-actions on projective hypersurfaces of given degree. In particular, actions on degenerate projective quadrics are studied.
\end{abstract}
%
\maketitle
%
%
\section*{Introduction}
The theory of toric varieties plays an important role in modern Geometry, Algebra, Topology and Combinatorics.
It is caused by a beautiful description of toric varieties in terms of convex geometry.
Actual efforts to generalize such a description to other classes of objects are very natural. Let us recall that an  irreducible normal algebraic variety $X$ is called toric if there is a regular action of an algebraic torus $T$ on $X$ with an open orbit, see, for example, \cite{Fu}. This definition can be generalized in different ways. One possibility is to
consider arbitrary torus actions on algebraic varieties. Recently a semi-combinatorial description of such actions in
terms of so-called polyhedral divisors living on varieties of smaller dimension was introduced in \cite{AH} and \cite{AHS}.
Another variant is to restrict the (complex) algebraic torus action on a toric variety to the maximal compact
subtorus $(S^1)^n$, to axiomatize this class of $(S^1)^n$-actions, and to consider such actions on wider classes of topological spaces.
This is an active research area called Toric Topology, see~\cite{BP}. Further, one may consider algebraic actions with
an open orbit replacing the torus $T$ with a non-abelian connected reductive algebraic group $G$. The study of generically transitive $G$-actions leads to the theory of equivariant embeddings of homogeneous spaces $G/H$, where $H$ is an algebraic subgroup of $G$. This theory is well-developed. In particular,
there are classes of homogeneous spaces with combinatorial description of embeddings. These are the so-called spherical homogeneous spaces or, more generally, homogeneous spaces of complexity not exceeding one.
The description of embeddings here is much more complicated than in the toric case, see~\cite{LV}, \cite{Ti}.

It is also natural to replace the torus $T$ with the commutative unipotent group  $\GG_a^n=\GG_a\times\dots\times\GG_a$ ($n$ times), where $\GG_a$ is the additive group of the ground field $\KK$. The theory of generically transitive $\GG_a^n$-actions may be regarded as an "additive analogue" of toric geometry. But in this case we come across principal differences. It is easy to see that a generically transitive action of a unipotent group on an affine variety is transitive \cite[Section~1.3]{VP}.
So, in contrast with the toric case, we can not cover a variety with generically transitive $\GG_a^n$-action by invariant affine
charts. Also it is known that any toric variety contains finitely many $T$-orbits and that an isomorphism between toric varieties as algebraic varieties provides their isomorphism in the category of toric varieties \cite[Theorem~4.1]{Be}. In additive case these properties do not hold: one may consider two $\GG_a^2$-actions on the projective plane $\PP^2$ given in homogeneous coordinates as
$$
[z_0:z_1:z_2]\to [z_0:z_1+a_1z_0:z_2+a_2z_0]
$$
and
$$
[z_0:z_1:z_2]\to [z_0:z_1+a_1z_0:z_2+a_1z_1+(\frac{a_1^2}{2}+a_2)z_0].
$$
In the first case, there is a line consisting of fixed points, while for the second action there are three $\GG_a^2$-orbits.
Finishing with these negative observations, we turn to a positive side.

In~\cite{HT}, an algebraic interpretation of generically transitive $\GG_a^n$-actions on $\PP^n$ is given. Namely, it is shown there that such actions correspond to local finite-dimensional algebras. The aim of the present paper is to study this correspondence in more details. In Section~1 we recall basic facts on generically transitive $\GG_a^n$-actions. There is an initial correspondence between cyclic rational faithful $(m+1)$-dimensional $\GG_a^n$-modules $V$ and isomorphism classes of pairs $(R,U)$, where $R$ is a local associative commutative $\KK$-algebra with an $m$-dimensional maximal ideal $\mm$ and $U$ is an $n$-dimensional subspace of $\mm$ that generates the algebra $R$. These data determine a generically transitive $\GG_a^n$-action on the closure of the orbit $\GG_a^n\langle v\rangle$ in the projectivization $\PP^m$ of the space $V$
with $v\in V$ being a cyclic vector. Conversely, every generically transitive $\GG_a^n$-action on a normal projective variety arises in this way.
Suppose that the subspace $U$ coincides with the maximal ideal $\mm$, or equivalently, $n=m$. Then we obtain a generically transitive $\GG_a^n$-action on the projective space $\PP^n$.

Let $G$ be an algebraic group acting on an algebraic variety $X$.
Recall that {\it modality} of the action is the maximal number of parameters in a continuous family of $G$-orbits on $X$. In Section~\ref{sec2} modality of a generically transitive $\GG_a^n$-action on $\PP^n$ is calculated in terms of the corresponding local algebra $R$. Also some estimates on modality are given. The next section is devoted to classification of $\GG_a^n$-actions on $\PP^n$ of modality one. The classification results in series of finite-dimensional 2-generated local algebras (Theorem~\ref{Tclas}). It implies that for every given $n$ the number of actions of modality one is finite and for $n\ge 5$ there are exactly $n+1$ generically transitive $\GG_a^n$-actions on $\PP^n$ of modality one. On the contrary, actions of modality two admit moduli spaces of positive dimension. It brings some analogy with the embedding theory for homogeneous spaces of reductive groups with small complexity, cf.~\cite{Ti}.

Starting from Section~\ref{sec3} we study generically transitive $\GG_a^n$-actions on projective hypersurfaces.
These actions correspond to the case when $U$ is a hyperplane in the ideal $\mm$. Generically transitive $\GG_a^n$-actions on the non-degenerate quadric $Q_n\subset\PP^{n+1}$ are described in~\cite{Sha}. It is shown there that for every $n$ such an action is unique up to isomorphism, and the corresponding pair $(R,U)$ is indicated. We prove in Section~\ref{sec3} that families (of isomorphism classes) of
generically transitive $\GG_a^n$-actions on a degenerate quadric admit moduli.
In Theorem~\ref{qth} we characterize pairs $(R,U)$ representing actions on quadrics and develop an algebraic techniques to deal with such actions.
More generally, Theorem~\ref{dth} states that if $U$ is a hyperplane in $\mm$ and the pair $(R,U)$ determines a generically transitive $\GG_a^n$-action on a hypersurface $X\subset\PP^{n+1}$, then the degree of $X$ equals the maximal exponent $d$ such that $\mm^d$ is not contained in $U$.

In the last section we discuss possible generalizations, namely, $\GG_a^n$-structures on flag varieties and toric varieties.

\smallskip

We use \cite{Ha} as a general reference for algebraic geometry, and \cite{Hu} for algebraic groups and their representations.
The ground field $\KK$ is assumed to be algebraically closed and of characteristic zero.

%
\section{Hassett-Tschinkel correspondence}
\label{sec1}

In this section we recall some results from~\cite{HT}, see also \cite[\S~2]{Sha}.
Let $\rho:\GG_a^n\to\GL_{m+1}(\KK)$ be a faithful rational representation.
The differential defines a representation $d\rho:\gg\to\gl_{m+1}(\KK)$ of the tangent algebra $\gg = \text{Lie}(\GG_a^n)$ and the induced representation $\tau:U(\gg)\to\Mat_{m+1}(\KK)$ of the universal enveloping algebra $U(\gg)$. Since the group $\GG_a^n$ is commutative, the algebra $U(\gg)$ is isomorphic to the polynomial algebra
$\KK[x_1,\dots,x_n]$, where  $\gg$ is identified with the subspace $\langle x_1,\dots,x_n\rangle$.
The algebra $R:=\tau(U(\gg))$ is isomorphic to the factor algebra $U(\gg)/\Ker\tau$, where
$\Ker \tau =\{y\in U(\gg) \, : \, \tau(y)=0\}$.
As $\tau(x_1),\ldots,\tau(x_n)$ are commuting
nilpotent operators, the algebra $R$ is finite-dimensional and local. Let us denote by $X_1,\dots, X_n$ the images of the elements $x_1,\dots,x_n$
in~$R$. Then the maximal ideal of $R$ is $\mm:=(X_1,\dots, X_n)$.
Clearly, the subspace $U:=\tau(\gg)=\langle X_1,\ldots,X_n\rangle$ generates the algebra $R$.

Assume that $\KK^{m+1}$ is a cyclic $\GG_a^n$-module with a cyclic vector $v$, i.e., $\langle\rho(\GG_a^n)v\rangle=\KK^{m+1}$.
The subspace $\tau(U(\gg))v$ is $\gg$-  and $\GG_a^n$-invariant; it contains the vector $v$ and therefore coincides with the space $\KK^{m+1}$. Let $I=\{y\in U(\gg)\, : \, \tau(y)v=0\}$. Since the vector $v$ is cyclic, the ideal $I$ coincides with
$\Ker\tau$, and we get identifications
$$
R \cong U(\gg)/I \cong \tau(U(\gg))v=\KK^{m+1}.
$$
Under these identifications the action of an element $\tau(y)$ on $\KK^{m+1}$ corresponds to the operator of multiplication by $\tau(y)$ on the factor algebra $R$, and the vector $v\in\KK^{m+1}$ goes to the residue class of unit. Since
$\GG_a^n = \exp(\gg)$, the $\GG_a^n$-action on $\KK^{m+1}$ corresponds to the multiplication by elements of $\exp(U)$ on $R$.

\smallskip

Conversely, let $R$ be a local $(m+1)$-dimensional algebra with a maximal ideal $\mm$, and $U\subseteq \mm$ be a subspace that
generates the algebra $R$. Fix a basis $X_1,\ldots,X_n$ in $U$. Then $R$ admits a presentation $\KK[x_1,\ldots,x_n]/I$,
where $I$ is the kernel of the homomorphism $\KK[x_1,\ldots,x_n]\to R, \, x_i \mapsto X_i$.
These data define a faithful representation $\rho$ of the group $\GG_a^n:=\exp(U)$ on the space $R$: the operator $\rho((a_1,\dots,a_n))$ acts as multiplication by the element $\exp(a_1X_1+\dots+a_nX_n)$. Since $U$ generates $R$, one checks that the representation is cyclic with unit in $R$
as a cyclic vector.

Summarizing, we get the following result.

\begin{theorem} \cite[Theorem~2.14]{HT}. \label{T1}
The correspondence described above establishes a bijection between
\begin{enumerate}
\item[$(1)$] equivalence classes of faithful cyclic rational representations $\rho:\GG_a^n\to\GL_{m+1}(\KK)$;
\item[$(2)$] isomorphism classes of pairs $(R,U)$, where $R$ is a local $(m+1)$-dimensional algebra
with a maximal ideal $\mm$ and $U$ is an $n$-dimensional subspace of $\mm$ that generates the algebra $R$.
\end{enumerate}
\end{theorem}

\begin{remark}
Let $\rho:\GG_a^n\to\GL_{m+1}(\KK)$ be a faithful cyclic rational representation.
The set of cyclic vectors in $\KK^{m+1}$ is an open orbit of a commutative algebraic
group $C$ with  $\rho(\GG_a^n)\subseteq C \subseteq \GL_{m+1}(\KK)$, and the complement
of this set is a hyperplane. In our notation, the group $C$ is the extension
of the commutative unipotent group $\exp(\mm)$ by scalar matrices.
\end{remark}

A faithful linear representation $\rho:\GG_a^n\to\GL_{m+1}(\KK)$
determines an effective action of the group $\GG_a^n$ on the projectivization $\PP^m$ of the space $\KK^{m+1}$.
Conversely, let $G$ be a connected affine algebraic group with the trivial Picard group, and $X$ be a normal
$G$-variety. By \cite[Section~2.4]{KKLV}, every line bundle on $X$ admits a $G$-linearization.
Moreover, if $G$ has no non-trivial characters, then a $G$-linearization is unique.
This shows that every effective $\GG_a^n$-action on $\PP^m$ comes from a (unique) faithful rational
$(m+1)$-dimensional $\GG_a^n$-module.

An effective $\GG_a^n$-action on $\PP^m$ is generically transitive if and only if $n=m$.
In this case the corresponding $\GG_a^n$-module is cyclic.
It terms of Theorem~\ref{T1} the condition $n=m$ means $U=\mm$, and we obtain the following theorem.

\begin{theorem}\cite[Proposition~2.15]{HT} \label{T2}
There is a one-to-one correspondence between:
\begin{enumerate}
\item[$(1)$] equivalence classes of generically transitive $\GG_a^n$-actions on $\PP^n$;
\item[$(2)$] isomorphism classes of local $(n+1)$-dimensional algebras.
\end{enumerate}
\end{theorem}

In this correspondence we realize the space $\PP^n$ as the projectivization $\PP(R)$ of the algebra $R$, and the $\GG_a^n$-action is given as the action of a group of invertible elements of $R$ by multiplication. The complement of the open $\GG_a^n$-orbit on $\PP(R)$ is the hyperplane $\PP(\mm)$. More precisely, let us give an algebraic interpretation of the $\GG_a^n$-orbit structure on $\PP^n$.
Recall that two elements $a$ and $b$ of an algebra $R$ are {\it associated} if there exists an invertible element $c\in R$ such that $a=bc$.

\begin{proposition} \label{rem1}
The correspondence of Theorem~\ref{T2} determines a bijection between $\GG_a^n$-orbits on $\PP^n$ and association classes of nonzero elements in the algebra $R$.
\end{proposition}

\begin{proof}
The $\GG_a^n$-orbit of the point $[x]\in \PP(R)$, where $x\in R\setminus \{0\}$, coincides with
$[\KK^{\times}\cdot\exp(\mm)x]\subset \PP(R)$. Further, the set $R^{\times}$ of invertible elements of the algebra $R$
is $\KK^{\times}\oplus\mm$.  Since $\exp(\mm)=1+\mm$, we get $\KK^{\times}\cdot\exp(\mm)x=R^{\times}x$.
\end{proof}

\begin{remark}
The correspondence of Theorem~\ref{T2} together with classification results of \cite{ST}
yields that the number of equivalence classes of generically transitive $\GG_a^n$-actions
on $\PP^n$ is finite if and only if $n\le 5$ \cite[Section~3]{HT}, see also \cite{Ma}.
\end{remark}

%
\section{Modality of actions on projective spaces}
\label{sec2}

Suppose that an affine algebraic group $G$ acts regularly on an irreducible algebraic variety $X$. It is well known \cite[Section~1.4]{VP} that there is a non-empty open subset $W\subseteq X$ consisting of $G$-orbits of maximal dimension. Denote by $d(G,X)$ the codimension of the orbit $Gx$ in $X$, where $x\in W$. It follows from Rosenlicht's Theorem \cite[Section~2.3]{VP} that $d(G,X)$ equals the transcendency degree of the field $\KK(X)^G$ of rational invariants on the variety $X$. The condition $d(G,X)=0$ means that the $G$-action is generically transitive.

The {\it modality} $\modd(G,X)$ of a $G$-action on $X$ is the maximal value of $d(G,Y)$ over all irreducible $G$-invariant subvarieties $Y\subseteq X$. In particular, we have $\modd(G,X)=0$ if and only if $X$ contains a finite number of $G$-orbits.

Consider an action of the group $\GG_a^n$ on the space $\PP^m$ corresponding to the pair $(R,U)$ as in Section~\ref{sec1}.
Since the representation $d\rho$ of the Lie algebra $\gg = \text{Lie}(\GG_a^n)$ on the space $\KK^{m+1}$ is given
by multiplication by elements of $U$ on~$R$, we conclude that the isotropy subalgebra $\gg_x$ of an element $x\in R$ coincides with the subspace
$$
\Ann_U(x) \ = \ \{y\in U\, : \, xy=0\}.
$$
In particular, we have $\gg_x=0$ for every $x\in R\setminus\mm$. Denote by $C_{k,U}$ the Zariski closure of the subset $\{x\in\mm\, :\, \dim\Ann_U(x)=k\}\subseteq\mm$. Then the typical $\GG_a^n$-orbit on each irreducible component of $C_{k,U}$ is of dimension $n-k$. This gives a first interpretation of modality.

\begin{proposition} \label{pr1}
The following equality holds
$$
\modd(\GG_a^n, \PP^m)=\max_k (\dim C_{k,U}+k) - (n+1).
$$
\end{proposition}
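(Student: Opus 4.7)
My plan is to pass from invariant subvarieties of $\PP^m$ to invariant affine cones in $R$, and then identify the relevant cones with pieces of the strata $C_{k,U}$ via upper semicontinuity of the annihilator-dimension function. I take the standing assumption of this section, $U=\mm$ (equivalently $n=m$), so that the $\GG_a^n$-action on $\PP^m=\PP(R)$ is generically transitive.

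To begin, every irreducible $\GG_a^n$-invariant subvariety $Y\subseteq\PP^m$ lifts to an irreducible $\GG_a^n$- and $\KK^*$-invariant subvariety $\widetilde Y\subseteq R$ of positive dimension, namely the closure of the affine cone over $Y$, with $\dim Y=\dim\widetilde Y-1$. Since $\GG_a^n$ is unipotent and admits no nontrivial character to $\GG_m$, the stabilizer of $[x]\in\PP(R)$ in $\GG_a^n$ coincides with the stabilizer of $x\in R$, whose Lie algebra is $\Ann_U(x)$; hence the orbit of $[x]$ has dimension $n-\dim\Ann_U(x)$. The function $x\mapsto\dim\Ann_U(x)$ is upper semicontinuous (as the kernel dimension of the family of linear maps $U\to R$, $y\mapsto yx$, varying with $x$), so on an irreducible $\widetilde Y\subseteq\mm$ it attains its minimum value $k$ on a dense open subset. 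Consequently $\widetilde Y\subseteq C_{k,U}$, generic orbits in $Y=\PP(\widetilde Y)$ have dimension $n-k$, and
$$
d(\GG_a^n,Y)=\dim Y-(n-k)=\dim\widetilde Y+k-(n+1)\leq \dim C_{k,U}+k-(n+1),
$$
which gives $\modd(\GG_a^n,\PP^m)\leq\max_k(\dim C_{k,U}+k)-(n+1)$.

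For the reverse bound I would verify that each $C_{k,U}$ is itself $\GG_a^n\times\KK^*$-invariant (multiplication by the unit $\exp(y)$ and by a nonzero scalar both preserve $\Ann_U$), and hence, by connectedness of $\GG_a^n$ and $\KK^*$, each irreducible component of $C_{k,U}$ is individually invariant. Projectivizing an irreducible component of maximal dimension produces an invariant subvariety of $\PP(\mm)$ on which the generic orbit has dimension $n-k$, realizing the value $\dim C_{k,U}+k-(n+1)$ and matching the upper bound.

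The main obstacle is the case $Y=\PP^m$, whose cone $\widetilde Y=R$ is not contained in $\mm$ and so is not covered by the $C_{k,U}$-stratification. Under the assumption $U=\mm$, the $\GG_a^n\times\KK^*$-orbit of $1\in R$ already fills the unit group $R^*=\KK^*\exp(\mm)$, so any invariant cone meeting $R\setminus\mm$ must equal $R$; this forces the dichotomy ``$\widetilde Y=R$ or $\widetilde Y\subseteq\mm$''. The case $\widetilde Y=R$ gives $d=0$, and the formula remains consistent because the socle of $R$ is a nonzero subspace of $\mm$, yielding $\dim C_{n,\mm}\geq 1$ and hence $\max_k(\dim C_{k,\mm}+k)\geq n+1$.
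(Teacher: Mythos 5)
Your argument is correct and, for the case it treats, it is the paper's argument: the paper gives no written proof of this proposition beyond the two observations preceding it (that the isotropy subalgebra of $x$ is $\Ann_U(x)$, and that typical orbits on each irreducible component of $C_{k,U}$ have dimension $n-k$), and your write-up supplies exactly the details one would want --- the passage to $\KK^{\times}$-stable affine cones, upper semicontinuity of $x\mapsto\dim\Ann_U(x)$, the $\GG_a^n\times\KK^{\times}$-invariance of each component of $C_{k,U}$ for the reverse inequality, and the separate treatment of invariant subvarieties meeting $\PP(R)\setminus\PP(\mm)$.

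The one point to flag is your opening assumption $U=\mm$. In the paper that standing assumption is introduced only \emph{after} Proposition~\ref{pr1}, so the proposition as stated claims the formula for an arbitrary pair $(R,U)$ with $n\le m$, and your proof covers only the special case $n=m$. Your own analysis of the ``main obstacle'' locates exactly where the general case diverges: if $U\subsetneq\mm$, an invariant irreducible $Y\not\subseteq\PP(\mm)$ need not be all of $\PP^m$; its generic orbits are free of dimension $n$, so it contributes $d(\GG_a^n,Y)=\dim Y-n$, up to $m-n$ for $Y=\PP^m$, and since the sets $C_{k,U}$ live inside $\mm$ this contribution is not controlled by $\max_k(\dim C_{k,U}+k)-(n+1)$. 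A concrete instance: $(R,U)=(\KK[t]/(t^3),\langle t\rangle)$ acting on $\PP^2$ by $[z_0:z_1:z_2]\mapsto[z_0:z_1+az_0:z_2+az_1+\tfrac{a^2}{2}z_0]$ has modality $1$ (the conics $2z_0z_2-z_1^2=cz_0^2$ form a one-parameter family of orbit closures), whereas $C_{0,U}=\mm$ and $C_{1,U}=\langle t^2\rangle$ make the right-hand side equal to $0$. So in general the right-hand side should read $\max\bigl(m-n,\ \max_k(\dim C_{k,U}+k)-(n+1)\bigr)$; under $U=\mm$ the extra term is $0$ and is absorbed (as you verify via the socle), which is why your restricted proof --- and every subsequent use of the proposition in the paper --- is sound. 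In short, the restriction you imposed is not a defect of your argument but an implicit hypothesis the statement itself needs.
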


In the sequel we consider generically transitive $\GG_a^n$-actions on $\PP^n$, i.e., assume $n=m$ and $U=\mm$. Let us introduce some notation: $\Ann(x):=\Ann_{\mm}(x)$, $C_k:=C_{k,\mm}$ and $\modd(R):=\modd(\GG_a^n,\PP^n)$. Clearly,  $0\le\modd(R)\le n-1$. For each $n$, Hassett and Tschinkel have proved that the extreme value $0$ is
achieved for a unique algebra $R$, see also Corollary~\ref{mod0} below. In Proposition~\ref{modn-1} we prove that
the extreme value $n-1$ is also achieved in a unique way.

\begin{example} \label{exam1}
Let $R=\KK[x_1,\dots,x_s]/I_{N+1}$, where the ideal $I_{N+1}\subset\KK[x_1,\dots,x_s]$ is generated by all  monomials of degree $N+1$. The annihilator of a polynomial $f(X_1,\dots,X_s)\in R$ coincides with the linear span of all monomials of degree $\ge N+1-k$, where $k$ is degree of the lowest term of $f$. Therefore all non-empty subsets $C_k$ are linear spans of monomials of degree greater or equal to a given value, and
$$
\modd(R)=\sum_{i=[\frac{N+1}{2}]}^N {s+i-1\choose s-1} -\sum_{i=1}^{[\frac{N}{2}]} {s+i-1\choose s-1}-1.
$$
In particular, $N=1$ and $s=n$ implies $\modd(R)=n-1$.
\end{example}

The operator of multiplication by $x\in\mm^i$ maps $\mm^j$ in $\mm^{i+j}$. Thus
$\dim\Ann(x)\ge\dim\mm^j-\dim\mm^{i+j}$, and we get

\begin{lemma} \label{pr2}
$\modd(R)\ge \max_{i,j}(\dim\mm^i+\dim\mm^j-\dim\mm^{i+j})-(n+1)$.
\end{lemma}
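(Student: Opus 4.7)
The strategy is to convert the target inequality, via Proposition~\ref{pr1}, into a statement about dimensions of the strata $C_k$, and then to read off such a statement from the observation preceding the lemma together with a standard upper-semicontinuity argument on the irreducible subset $\mm^i\subseteq\mm$.

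First, recall from Proposition~\ref{pr1} (applied with $U=\mm$) that
$$
\modd(R)=\max_k(\dim C_k+k)-(n+1),
$$
so it suffices, for fixed $i,j\geq 1$, to exhibit an index $k$ with
$\dim C_k+k\ge\dim\mm^i+\dim\mm^j-\dim\mm^{i+j}$.

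Fix such $i,j$ and consider any $x\in\mm^i$. The multiplication map $\mm^j\to\mm^{i+j}$, $y\mapsto xy$, has kernel contained in $\Ann(x)$ (since $\mm^j\subseteq\mm$), so by rank-nullity
$$
\dim\Ann(x)\ge\dim\mm^j-\dim\mm^{i+j}=:k_0.
$$
The function $x\mapsto\dim\Ann(x)$ is upper semicontinuous on $\mm$, hence on the irreducible linear subspace $\mm^i$ it attains a generic value $k_1$ on a Zariski-open dense subset $V\subseteq\mm^i$; by the previous inequality $k_1\ge k_0$. Since $V\subseteq\{x\in\mm:\dim\Ann(x)=k_1\}$, taking Zariski closure in $\mm$ yields $\mm^i\subseteq\overline V\subseteq C_{k_1}$, and therefore
$$
\dim C_{k_1}+k_1\ \ge\ \dim\mm^i+k_1\ \ge\ \dim\mm^i+\dim\mm^j-\dim\mm^{i+j}.
$$
Combined with Proposition~\ref{pr1} and taking the maximum over $i,j$, this delivers the required bound.

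I expect no serious obstacle; the only subtle point is step three, namely the observation that a generic annihilator dimension is well defined on $\mm^i$ and that the closure of the corresponding open stratum sits inside $C_{k_1}$. This is just upper semicontinuity plus irreducibility of $\mm^i$, so the proof is essentially mechanical once Proposition~\ref{pr1} and the rank-nullity estimate are in place.
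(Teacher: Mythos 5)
Your proof is correct and takes essentially the same route as the paper: the paper's entire argument is the observation that multiplication by $x\in\mm^i$ maps $\mm^j$ into $\mm^{i+j}$, hence $\dim\Ann(x)\ge\dim\mm^j-\dim\mm^{i+j}$, combined implicitly with Proposition~\ref{pr1}. You have merely made explicit the routine semicontinuity and irreducibility step showing that $\mm^i\subseteq C_{k_1}$ for the generic value $k_1$, which the paper leaves to the reader.
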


Set $r_i:=\dim\mm^i-\dim\mm^{i+1}$. In particular,  $r_0=1$. Suppose $\mm^N\ne 0$ and $\mm^{N+1}=0$. Then $n=r_1+r_2+\dots+r_N$.
Using Lemma~\ref{pr2} with $j=1$, we obtain

\begin{proposition} \label{pr3}
$\modd(R)\ge \max_i r_i-1$.
\end{proposition}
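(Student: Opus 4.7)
The plan is to derive Proposition~\ref{pr3} as an immediate specialization of Lemma~\ref{pr2}. Since Lemma~\ref{pr2} supplies the inequality
\[
\modd(R)\ge \max_{i,j}\bigl(\dim\mm^i+\dim\mm^j-\dim\mm^{i+j}\bigr)-(n+1),
\]
the natural thing to do is to fix $j=1$ and maximize only over $i$, which already lower-bounds the right-hand side.

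Concretely, I would begin by writing $\dim\mm=n$, which holds because $R$ is $(n+1)$-dimensional and decomposes as $R=\KK\cdot 1\oplus\mm$. Plugging $j=1$ into Lemma~\ref{pr2} then yields
\[
\modd(R)\ge \max_i\bigl(\dim\mm^i+n-\dim\mm^{i+1}\bigr)-(n+1)=\max_i\bigl(\dim\mm^i-\dim\mm^{i+1}\bigr)-1,
\]
and by definition of $r_i$ the right-hand side equals $\max_i r_i-1$, which is the stated bound.

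There is no real obstacle here; the statement is a one-line corollary of Lemma~\ref{pr2} once the observation $\dim\mm=n$ is made. The only mild subtlety worth noting is that one should take the maximum over $i\ge 1$ (so that $\mm^i\subseteq\mm$ and the multiplication argument of Lemma~\ref{pr2} applies to genuine elements of $\mm$); for $i\ge N+1$ one has $\mm^i=0$ so $r_i=0$, and the maximum is in any case attained for some $1\le i\le N$. Thus the proof is essentially a direct substitution, and no additional machinery beyond Lemma~\ref{pr2} is required.
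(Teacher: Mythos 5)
Your proof is correct and is exactly the paper's argument: the paper also obtains Proposition~\ref{pr3} by specializing Lemma~\ref{pr2} to $j=1$ and using $\dim\mm=n$. Nothing further is needed.
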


 Example~\ref{exam1} shows that the modality can significantly exceed the value $\max_i r_i-1$.

\begin{corollary}
\label{modgen}
If $\modd(R)=l$, then the algebra $R$ can be generated by $l+1$ elements and $\dim\mm^N\le l+1$.
\end{corollary}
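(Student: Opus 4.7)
The plan is to read both conclusions off Proposition~\ref{pr3}, which gives $\mathrm{mod}(R)\ge \max_i r_i - 1$ where $r_i = \dim\mm^i - \dim\mm^{i+1}$. Both claims correspond to specific indices in this maximum, so the work is to interpret $r_1$ and $r_N$ in structural terms.

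First, I would handle the generation statement. Since $R$ is a local Noetherian (in fact finite-dimensional) $\KK$-algebra with maximal ideal $\mm$, Nakayama's lemma implies that any lifting of a $\KK$-basis of $\mm/\mm^2$ to $\mm$ generates $\mm$ as an ideal, and hence generates $R$ as an algebra. Conversely, any generating set of $R$ (whose elements we may assume lie in $\mm$, by subtracting scalars) must map to a spanning set of $\mm/\mm^2$, so the minimal number of algebra generators of $R$ equals $r_1 = \dim \mm/\mm^2$. Proposition~\ref{pr3} applied with $i=1$ then gives $r_1 - 1 \le \mathrm{mod}(R) = l$, that is, $R$ is generated by at most $l+1$ elements.

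Second, for the bound on $\dim\mm^N$, recall the convention $\mm^N \neq 0$ and $\mm^{N+1}=0$ used in the excerpt. Then $r_N = \dim\mm^N - \dim\mm^{N+1} = \dim\mm^N$, and applying Proposition~\ref{pr3} again with $i=N$ yields $\dim\mm^N - 1 \le l$, i.e.\ $\dim\mm^N \le l+1$.

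I do not expect any real obstacle here: the corollary is essentially an unpacking of Proposition~\ref{pr3} at the two ends $i=1$ and $i=N$, combined with the standard Nakayama identification of $r_1$ with the minimal number of generators. The only thing that needs to be stated carefully is that Nakayama applies because $\mm$ is nilpotent (so all relevant finite generation/lifting assumptions are automatic), and that the convention $\mm^{N+1}=0$ makes $r_N$ coincide with $\dim\mm^N$.
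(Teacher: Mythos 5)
Your proposal is correct and matches the paper's (implicit) argument: the corollary is stated without proof precisely because it is the specialization of Proposition~\ref{pr3} to $i=1$ and $i=N$, with $r_1=\dim\mm/\mm^2$ identified via Nakayama as the minimal number of algebra generators and $r_N=\dim\mm^N$ because $\mm^{N+1}=0$. Nothing is missing.
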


\begin{corollary}\cite[Proposition~3.7]{HT}
\label{mod0}
For any $n>0$ there exists a unique $(n+1)$-dimensional local algebra of modality zero; it is
isomorphic to $\KK[x]/(x^{n+1})$.
\end{corollary}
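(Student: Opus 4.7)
The plan is to combine Corollary~\ref{modgen} with Remark~\ref{rem1} and the elementary structure of singly-generated local algebras. One direction (existence of such an algebra with modality zero) is computational; the other direction (uniqueness) is almost purely formal once one-generation has been extracted from Corollary~\ref{modgen}.

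First I would verify that $R_0 := \KK[x]/(x^{n+1})$ has modality zero. By Remark~\ref{rem1}, the $\GG_a^n$-orbits on $\PP^n$ correspond bijectively to association classes of nonzero elements of $R_0$. Any nonzero $f \in R_0$ can be written as $f = x^k u$ with $0 \le k \le n$ and $u$ a unit (factor out the lowest-degree monomial; the remaining factor has nonzero constant term, hence is a unit). Therefore $f$ is associated to $x^k$, giving exactly $n+1$ association classes, hence $n+1$ orbits on $\PP^n$, which is finite. Thus $\modd(R_0)=0$.

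For uniqueness, suppose $R$ is a local $(n+1)$-dimensional algebra with $\modd(R) = 0$. Corollary~\ref{modgen} applied with $l = 0$ asserts that $R$ is generated as a $\KK$-algebra by a single element, which may be taken to lie in $\mm$ (replace any generator $y$ by $y$ minus its scalar part). The resulting surjection $\KK[x] \twoheadrightarrow R$ has some ideal $I$ as kernel. Its image is the nilpotent ideal $\mm$, so some power $x^N$ lies in $I$; on the other hand, any element of $I$ with nonzero constant term would force a nonzero scalar to lie in $\mm$, which is impossible. Since $\KK[x]$ is a principal ideal domain, $I = (x^N)$, and comparing dimensions forces $N = n+1$. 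Hence $R \cong \KK[x]/(x^{n+1})$.

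The substantive input is Corollary~\ref{modgen}; once 1-generation is available, the remainder is formal. The only place that could conceivably hide a subtlety is the claim that an algebra generator can be chosen inside $\mm$, but this is immediate from the decomposition $R = \KK \oplus \mm$ valid for any local $\KK$-algebra with residue field $\KK$. So I do not expect any real obstacle.
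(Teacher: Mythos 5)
Your proof is correct and follows the same route as the paper, whose entire argument is the single observation that Corollary~\ref{modgen} with $l=0$ forces $R$ to be generated by one element; you have merely spelled out the (routine) classification of singly-generated local algebras and the verification that $\KK[x]/(x^{n+1})$ indeed has finitely many association classes, both of which the paper leaves implicit.
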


\begin{proof}
By Corollary~\ref{modgen}, an algebra of modality zero is generated by one element.
\end{proof}

Let us recall that the sequence $(r_0,r_1,\ldots,r_N)$ is called the {\em Hilbert-Samuel sequence}
of the algebra $R$. This sequence does not determine modality of the algebra. Indeed, consider the algebras
$$
\KK[x,y]/(x^4,xy,y^3) \ \ \text{and} \ \ \KK[x,y]/(x^4, y^2, x^2y).
$$
They share the same Hilbert-Samuel sequence $(1,2,2,1)$. In the next section it is shown that the first algebra has modality one, while the second one has modality two.

Let $\Ann(\mm)=\{x\in\mm\, :\, yx=0 \ \text{for}\ \text{all}\ y\in\mm\}$. Then
the subspace $\PP(\Ann(\mm))$ coincides with the set of  $\GG_a^n$-fixed points in $\PP^n$,
and the inequality $\modd(R)\ge r_N-1$ can be strengthened as
$\modd(R)\ge\dim\Ann(\mm)-1$. It is known that a local finite-dimensional algebra $R$ is Gorenstein if and only if $\dim\Ann(\mm)=1$ \cite[Proposition~21.5]{Ei}. Such algebras are assigned to generically transitive $\GG_a^n$-actions on $\PP^n$ with a unique fixed point.

\begin{proposition}
\label{modn-1}
For each $n>0$ there exists a unique $(n+1)$-dimensional local algebra of modality $n-1$; it is isomorphic to $\KK[x_1,\ldots,x_n]/(x_ix_j, \, 1 \le i \le j \le n)$.
\end{proposition}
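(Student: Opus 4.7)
The plan is to apply Proposition~\ref{pr1} (with $U=\mm$) in both directions. For \emph{existence}, take $R_0:=\KK[x_1,\dots,x_n]/(x_ix_j,\,1\le i\le j\le n)$. Its maximal ideal satisfies $\mm^2=0$, so every $x\in\mm$ has $\Ann(x)=\mm$, whence $C_n=\mm$ has dimension $n$. Substituting into $\modd(R)=\max_k(\dim C_k+k)-(n+1)$ gives $\modd(R_0)\ge 2n-(n+1)=n-1$. Combined with the a priori bound $\modd(R)\le n-1$ noted at the start of the section, we conclude $\modd(R_0)=n-1$.

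For \emph{uniqueness}, assume $\modd(R)=n-1$, equivalently $\max_k(\dim C_k+k)=2n$. Since $C_k\subseteq\mm$ one has $\dim C_k\le n$, and since $\Ann(x)\subseteq\mm$ one has $k\le n$. For any $k\le n-1$ this forces $\dim C_k+k\le 2n-1$, so the maximum $2n$ can only be attained at $k=n$, where it requires $\dim C_n=n$. The function $x\mapsto\dim\Ann(x)$ is upper semicontinuous on $\mm$ (the annihilator is the kernel of the linear operator $y\mapsto xy$ on $\mm$), so $C_n$ is contained in the closed set $\{x\in\mm:\dim\Ann(x)\ge n\}$, which coincides with $\Ann(\mm)=\{x\in\mm : \mm x=0\}$. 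Hence $\dim\Ann(\mm)=n=\dim\mm$, which forces $\Ann(\mm)=\mm$, i.e., $\mm^2=0$.

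Once $\mm^2=0$, the algebra splits as $R=\KK\cdot 1\oplus\mm$ with trivial product on $\mm$, and any linear basis $X_1,\dots,X_n$ of $\mm$ produces the isomorphism $R\cong\KK[x_1,\dots,x_n]/(x_ix_j,\,1\le i\le j\le n)$. The essential step is the observation that both $\dim C_k$ and $k$ are bounded by $n$, so the extreme value $2n$ of the modality expression must come from the ``top stratum'' $k=n$; the identification $C_n=\Ann(\mm)$ is then immediate from semicontinuity, and nothing more delicate is required.
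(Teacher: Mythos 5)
Your proof is correct and follows essentially the same route as the paper: the paper argues directly that an $(n-1)$-parameter family of orbits must consist of fixed points filling the hyperplane $\PP(\mm)$, which is equivalent to $\mm^2=0$, while you reach the same conclusion by running the formula of Proposition~\ref{pr1} and observing that the maximum $2n$ forces $k=n$ and $C_n=\Ann(\mm)=\mm$. Your version is slightly more explicit about both directions (existence and uniqueness), but the underlying argument is identical.
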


\begin{proof}
Suppose there is a $\GG_a^n$-action on $\PP^n$ with an $(n-1)$-parameter continuous orbit family. Then any orbit from this family is just a $\GG_a^n$-fixed point. The complement to the open orbit in $\PP^n$ is the hyperplane $\PP(\mm)$, so all the points in this hyperplane should be $\GG_a^n$-fixed. This condition is equivalent to the equality $\mm^2=0.$
\end{proof}

\begin{remark} \label{rems}
Since the association classes in the algebra $R$ coincide with orbits of the lineal algebraic group
$\KK^{\times}\cdot\exp(\mm)$, the classes of a given dimension form a finite union of irreducible
locally closed subvarieties in $R$. The codimension of a class in such a subvariety may be addressed
as the number of parameters in a continuous family of classes. The codimension is preserved under the
passage to projectivization $\PP(R)$, and by Proposition~\ref{rem1}, the maximal
number of parameters is the modality of $R$. Moreover, the closure of the association class of
an element $x\in R$ coincides with the principal ideal $Rx$. Thus we may speak about the number of
parameters in a continuous family of principal ideals and interpret $\modd(R)$ as the maximum
of these numbers.
\end{remark}

\begin{proposition}
\label{surhom}
If $I$ is a proper ideal of the algebra $R$, then
$$
\modd(R/I) \le \modd(R).
$$
\end{proposition}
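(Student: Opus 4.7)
The plan is to lift every irreducible invariant subvariety from $\PP(R/I)$ to an irreducible invariant subvariety of $\PP(R)$ of at least the same modality, via the canonical quotient. Set $n=\dim\mm$, $d=\dim I$ and $n'=n-d$. Since $I\subseteq\mm$, the algebra $R/I$ is local with maximal ideal $\mm/I$, so by Theorem~\ref{T2} it corresponds to the generically transitive action of $\GG_a^{n'}=\exp(\mm/I)$ on $\PP(R/I)$. The multiplicativity $\pi(\exp(u)r)=\exp(\pi(u))\pi(r)$ of the quotient $\pi\colon R\to R/I$ makes the induced rational map $\t\pi\colon\PP(R)\dashrightarrow\PP(R/I)$ equivariant for the surjection $\exp(\mm)\twoheadrightarrow\exp(\mm/I)$ with $d$-dimensional kernel $\exp(I)$. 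The indeterminacy locus of $\t\pi$ is $\PP(I)$, and away from it $\t\pi$ is a morphism with affine fibers of dimension $d$.

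Next I would choose an irreducible $\GG_a^{n'}$-invariant subvariety $Y\subseteq\PP(R/I)$ realizing $\modd(R/I)$ and set
\[
Z := \overline{\t\pi^{-1}(Y)}\subseteq\PP(R).
\]
Since $\t\pi$ is a morphism on $\PP(R)\setminus\PP(I)$ with irreducible equidimensional fibers of dimension $d$, the preimage $\t\pi^{-1}(Y)$ is irreducible of dimension $\dim Y+d$, so the same holds for $Z$, and $Z$ is $\GG_a^n$-invariant by equivariance of $\t\pi$.

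For a generic point $z\in\t\pi^{-1}(Y)$, the image $\bar y:=\t\pi(z)$ is generic in $Y$. Equivariance sends $\GG_a^n\cdot z$ onto $\GG_a^{n'}\cdot\bar y$, and the fiber $(\GG_a^n\cdot z)\cap\t\pi^{-1}(\bar y)$ is contained in the $d$-dimensional affine space $\t\pi^{-1}(\bar y)$. Hence
\[
\dim(\GG_a^n\cdot z)\le\dim(\GG_a^{n'}\cdot\bar y)+d=(\dim Y-\modd(R/I))+d=\dim Z-\modd(R/I),
\]
so $d(\GG_a^n,Z)\ge\modd(R/I)$, and consequently $\modd(R)\ge\modd(R/I)$.

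The only delicate point is the dimension count for $Z$ and for a generic $\GG_a^n$-orbit in it; both rely on the fact that $\t\pi$ is a linear projection with equidimensional affine fibers on the complement of its fixed indeterminacy locus $\PP(I)$, so passing to closure in $\PP(R)$ is harmless. An alternative but essentially equivalent route would go through Remark~\ref{rems}: every family of principal ideals in $R/I$ pulls back to a family of principal ideals in $R$ of the same dimension (lifting a generator of each), so modality can only grow; but carrying this out rigorously requires exactly the same Grassmannian/dimension bookkeeping as the orbit-theoretic argument above.
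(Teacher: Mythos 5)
Your proof is correct and follows essentially the same route as the paper: both use the $\exp(\mm)\to\exp(\mm/I)$-equivariant linear projection $\PP(R)\setminus\PP(I)\to\PP(R/I)$ to pull back a maximal-modality invariant subvariety (the paper phrases this as lifting a continuous family of orbits via a dominating irreducible component of the preimage) and then bound orbit dimensions by the $d$-dimensional affine fibers. Your version merely makes explicit the irreducibility of the full preimage and the dimension bookkeeping that the paper leaves implicit.
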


\begin{proof}
The canonical homomorphism $R \to R/I$ induces a homomorphism of groups
$$
\GG_a^n:=\exp(\mm) \to \GG_a^r:=\exp(\mm/I), \quad \text{where}  \quad r=\dim(R/I)-1,
$$
and a surjective morphism of varieties $\PP(R) \setminus \PP(I) \to \PP(R/I)$.
The latter morphism is equivariant with respect to the $\GG_a^n$- and $\GG_a^r$-actions on $\PP(R)$ and $\PP(R/I)$ respectively.
Assume that we have a continuous $k$-parameter family of $\GG_a^r$-orbits on $\PP(R/I)$. Some irreducible component of the inverse image of this family projects to the family dominantly. Then typical $\GG_a^n$-orbits on this component form at least $k$-parameter family on $\PP(R) \setminus \PP(I)$. This completes the proof.
\end{proof}

We finish this section by introducing a combinatorial way to estimate modality of a local $(n+1)$-dimensional algebra $\KK[x_1,\dots,x_s]/I$, where $I$ is a monomial ideal. Denote by $\mathcal N$ the set of non-constant monomials in $x_1,\dots, x_s$ that do not belong to $I$. To each subset $\mathcal{L}\subseteq\mathcal N$  assign a subset
$S(\mathcal L)=\{a\in\mathcal N\, : \, ab\in I \ \text{for}\ \text{any} \ b\in\mathcal L\}$.
The subset $S(\mathcal L)$ consists of monomials annihilating all the elements of the factor algebra with support at $\mathcal L$. It implies the inequality
$$
\modd(R)\ge \max_{\mathcal L}(|\mathcal L|+|S(\mathcal L)|) -(n+1),
$$
which can be strict. For example, with the algebra $\KK[x_1,x_2]/(x_1^2,x_2^2)$ we get $1>0$.

%
%
\section{Algebras of modality one}
\label{sec22}

In this section we classify local algebras corresponding to generically transitive $\GG_a^n$-actions on $\PP^n$ of modality one.

\begin{theorem} \label{Tclas}
Local finite-dimensional algebras of modality one form the following list:
$$
A_{a,b}=\KK[x,y]/(x^{a+1},y^{b+1},xy), \ a\ge b\ge 1; \quad B_{a,b}=\KK[x,y]/(xy, x^a-y^b), \ a\ge b\ge 2 ;
$$

$$
C_a=\KK[x,y]/(x^{a+1}, y^2 - x^3),  \ a \ge 3; \quad C_a^1=\KK[x,y]/(x^{a+1}, y^2 - x^3, x^ay), \ a\ge 3;
$$

$$
C_a^2=\KK[x,y]/(x^{a+1}, y^2 - x^3, x^{a-1}y),  \ a \ge 3; \quad C_a^3=\KK[x,y]/(y^2 - x^3, x^{a-2}y), \ a\ge 4;
$$

$$
D=\KK[x,y]/(x^3, y^2); \quad E=\KK[x,y]/(x^3, y^2 ,x^2y).
$$
These algebras are pairwise non-isomorphic.
\end{theorem}

\begin{proof}
It follows from Corollary~\ref{mod0} that all algebras of Theorem~\ref{Tclas} have positive modality.
Let us show that their modality does not exceed one.

Every  element of the algebra $R=A_{a,b}$ has the form
$$
\alpha_k x^k + \alpha_{k+1} x^{k+1} + \ldots + \beta_s y^s +\beta_{s+1} y^{s+1} +\ldots \ = \
$$
$$
 \ = \
(\alpha_k x^k + \beta_s y^s)(1 + \widetilde{\alpha_{k+1}}x + \ldots + \widetilde{\beta_{s+1}}y + \ldots),
$$
where the second factor is invertible. For every $1\le k \le a$ and $1\le s \le b$ define a morphism
$$
\phi_{k,s} \colon \PP^1 \to \PP(R), \quad  \phi_{k,s}([\alpha:\beta])=[\alpha x^k+ \beta y^s].
$$
Since every non-open $\GG_a^n$-orbit in $\PP(R)$ intersects $\phi_{k,s}(\PP^1)$ for some $k$ and $s$,
modality of the algebra $A_{a,b}$ does not exceed one.
The algebra $B_{a,b}$ is a homomorphic image of $A_{a,b}$, and Proposition~\ref{surhom} implies that modality of $B_{a,b}$ does not exceed one.

\smallskip

For other types of algebras the following lemma is useful.

\begin{lemma}
\label{basmod}
Suppose that a linear basis $v_1,\ldots,v_n$ of the ideal $\mm$ of a local algebra $R$ satisfies the following conditions:
\begin{enumerate}
\item
the element $v_s$ does not belong to the principal ideal $(v_{s+1})$ for every $s\ge 1$;
\item
for all $v_s$ and $v_{s+p}$, $p \ge 2$, there exists a vector $w_{s,p} \in \mm$ such that
$v_{s+p} = v_s w_{s,p}$ and $v_{s+1} w_{s,p} = 0$ or $v_r$, $r \ge s + p +1$.
\end{enumerate}
Then the modality of the algebra $R$ does not exceed one.
\end{lemma}

\begin{proof}
Consider an element
$$
b \ = \ \alpha_0 v_s + \sum_{p\ge 1} \alpha_p v_{s+p}, \quad \text{with} \quad \alpha_0 \ne 0.
$$
Let us show that there exists an element
$$
c = 1 + \beta_2 w_{s,2} + \ldots + \beta_{n-s} w_{s, n-s}
$$
such that $b = (\alpha_0 v_s + \alpha_1 v_{s+1}) c$.
Indeed, write down the equality
$$
\alpha_0 v_s + \alpha_1 v_{s+1} + \ldots + \alpha_{n-s} v_n \ = \
(\alpha_0 v_s + \alpha_1 v_{s+1})(1 + \beta_2 w_{s,2} + \ldots + \beta_{n-s}w_{s,n-s}),
$$
and consider the coefficients at $v_{s+2}, \ldots, v_n$ on the right. They are equal to $\alpha_2,\ldots,\alpha_{n-s}$ respectively, so one may find the values of $\beta_2,\ldots,\beta_{n-s}$ successively.
The element $c$ is invertible, so again every non-open $\GG_a^n$-orbit in $\PP(R)$ intersects one of the curves
$\phi_s(\PP^1)$ with $1\le s \le n-1$, where
$$
\phi_s \colon \PP^1 \to \PP(R), \quad  \phi_s([\alpha:\beta])=[\alpha v_s+ \beta v_{s+1}].
$$
The assertion follows.
\end{proof}

Let us indicate a basis satisfying the conditions of Lemma~\ref{basmod} for the algebra~$C_a$:
$$
 x,\, y,\, x^2,\, xy,\, x^3,\, x^2y,\, \ldots, x^a,\, x^{a-1}y,\, x^ay,
$$
and for the algebra $D$:
$$
x,\ y,\ x^2,\ xy,\ x^2y.
$$
The algebras of types $C_a^1$, $C^2_a$, $C_a^3$ are factor algebras of $C_a$ and $E$ is a factor algebra of $D$, so their modality cannot exceed one.

\smallskip

Now we prove that any algebra of modality one is included in the list of Theorem~\ref{Tclas}.

\begin{lemma}
Modality of the algebra $H := \KK[x,y]/(x^4, y^2, x^2y)$ is greater than one.
\end{lemma}

\begin{proof}
One checks that typical $\GG_a^5$-orbits on the projective 3-space $\PP(\langle x^2, x^3, y, xy\rangle)$ are one-dimensional.
\end{proof}

\begin{lemma}
\label{reduce}
Let $R=\KK[x,y]/I$, where the ideal $I$ is generated by polynomials not containing any of the terms $x,x^2,x^3,y,xy$. Then modality of $R$ is greater than one.
\end{lemma}

\begin{proof}
By assumptions, the algebra $H$ is a factor algebra of $R$.
\end{proof}

Let $R$ be a local finite-dimensional algebra of modality one. Using Corollary ~\ref{modgen}, one can assume that it has the form $\KK[x,y]/I$, where every term of a polynomial from $I$ has degree $\ge 2$. Consider the algebra $R'=R/\mm^3$. It is easy to see that there exists a linear change of variables taking $R'$ to one of the following non-isomorphic algebras:
$$
\KK[x,y]/(x^2,xy,y^2),\ \KK[x,y]/(x^3,xy,y^2),\ \KK[x,y]/(x^2,y^2),
$$
$$
\KK[x,y]/(x^3,xy,y^3),\ \KK[x,y]/(x^3,x^2y,y^2).
$$

\smallskip

{\it Case 1}: $R'=\KK[x,y]/(x^2,xy,y^2)$. In this case we have $\mm^2=\mm^3$, so $R=R'=A_{1,1}$.

\smallskip

{\it Case 2}: $R'=\KK[x,y]/(x^3,xy,y^2)$. In this case $xy, y^2\in\mm^3$ and the factor space $\mm^i/\mm^{i+1}$ is spanned by the class of the element $x^i$ with $i\ge 3$. The elements of the form $x^i$ are linearly independent in $R$, hence the elements $1,y,x,x^2,\dots,x^a$, $a\ge 2$, form a basis of this algebra. Assume that $xy=\sum_{i\ge 3} \alpha_ix^i$. Replace the basis element $y$ with $y'=y-\sum_{i\ge 3} \alpha_ix^{i-1}$ to obtain $xy'=0$ and $(y')^2=\sum_{j\ge 3} \beta_jx^j$. Suppose that the last expression is not zero  and $\beta_s$ is the first nonzero coefficient. If $s<a$, then multiplying the equality by $x^{a-s}$, we get $x^a=0$, a contradiction. It means that $(y')^2=0$ or $(y')^2=x^a$, and the algebras $A_{a,1}$ ($a\ge 2$) and $B_{a,2}$ ($a\ge 3$) appear. Algebras of the second type are Gorenstein unlike the others. Algebras of the same type with different indices have different dimensions.

\smallskip

{\it Case 3}: $R'=\KK[x,y]/(x^2,y^2)$. In this case $x^2,y^2\in\mm^3$ and every monomial of degree three is in $\mm^4$, hence $\mm^3=\mm^4=0$,  $R=R'$. It is easy to check that the algebra $R'$ is isomorphic to $B_{2,2}$.

\smallskip

{\it Case 4}: $R'=\KK[x,y]/(x^3,xy,y^3)$. In this case we have $xy\in\mm^3$ and $R$ is a linear span of the elements $1,x,\dots,x^a,y,\dots,y^b$, $a\ge b\ge 2$. Assume that $xy=\sum_{i\ge 3}\alpha_ix^i+\sum_{j\ge 3}\beta_jy^j$.
Let us show that there exists a change of variables providing the condition $xy=0$. Indeed, take a basis
$x'=x+\sum_{k\ge 2}\phi_ky^k$ and $y'=y+\sum_{s\ge 2}\psi_sx^s$ and write down
$$
x'y'=\sum_{i\ge 3}\alpha_ix^i+\sum_{j\ge 3}\beta_jy^j+\sum_{k\ge 2}\phi_ky^{k+1}+
\sum_{s\ge 2}\psi_sx^{s+1}+\sum_{k,s\ge 2}\phi_k\psi_s x^sy^k.
$$
Here the monomial $x^sy^k$ is decomposed as the sum of terms $x^i$ and $y^j$ of degree at least $k+s+2$ each.
Now one can consider the coefficients at $x^3$, $y^3$, $x^4$, $y^4,\dots$ in the expression for $x'y'$ and obtain $\psi_2$, $\phi_2$, $\psi_3$, $\phi_3,\dots$ one by one. Therefore, we can further assume that $xy=0$.

Suppose there is an element $\xi_rx^r+\dots+\xi_ax^a+\mu_py^p+\dots+\mu_by^b$, $\xi_r\ne 0$, in the ideal $I$. If $r<a$, then we can multiply both parts by $x^{a-r}$ and come to a contradiction with the condition $x^a\ne 0.$ In the same way it can be shown that $p=b$. Thus the algebras
$A_{a,b}$, $a\ge b\ge 2$, and $B_{a,b}$,
$a\ge b\ge 3$, appear. The inequalities are caused by linear independency of the classes of the elements $x^2$ and $y^2$ in $\mm^2/\mm^3$. The algebras from the second family are Gorenstein unlike the others. The algebras of the same type with different indices have different Hilbert-Samuel functions.

\smallskip

{\it Case 5}: $R'=\KK[x,y]/(x^3,x^2y,y^2)$. In this case we have $y^2=\sum_{i\ge 3}\alpha_ix^i+\sum_{j\ge 2}\beta_jx^jy$. A change
$y'=y+\sum_{p\ge 2} \phi_px^p$ leads to the equation
$$
(y')^2=\sum_{i\ge 3}\alpha_ix^i+\sum_{j\ge 2}\beta_jx^jy+2\sum_{p\ge 2} \phi_px^py+\sum_{p,s\ge 2}\phi_p\phi_sx^{p+s}.
$$
Setting the coefficients at $x^2y,x^3y,\dots$ equal zero permits to obtain $\phi_2,\phi_3,\dots$ one by one. Therefore one can assume that $y^2=\sum_{i\ge 3}\alpha_ix^i$. It is easy to see that there exists a root of any integer order from an element  $\alpha_0 + \alpha_1 x + \alpha_2 x^2 + \ldots$, where $\alpha_0\ne 0$, in $R$. If $y^2 = x^e(\alpha_e + \alpha_{e+1}x + \ldots)$ and $\alpha_e \ne 0$, then replace $y$ by $v^{-1}y$, where $v^2 = \alpha_e + \alpha_{e+1}x + \ldots$, to obtain $y^2=x^e$. Now there are the variants $y^2=0$ or $y^2=x^e$. If the elements $x^a$ and $x^cy$ are nonzero and $x^{a+1}=x^{c+1}y=0$, then $a\ge c\ge 1$ è $a\ge 2$ (since the elements $x^2$ and $xy$ are not in $\mm^3$). Also the second possibility requires the restrictions $a\ge e\ge 3$ and $c+e\ge a$, since $y^2x^{c+1}=x^{c+e+1}=0$.

\smallskip

{\it Case 5.1}: $y^2 = 0$. There are also the conditions $x^{a+1}=0$ and $x^{c+1}y=0$. If there is no other relation in $R$, then by Lemma~\ref{reduce} we obtain the algebras $D$ and $E$.
Suppose that we have another equality
$\sum_{i\ge i_0} \xi_ix^i+\sum_{j\ge j_0} \mu_jx^jy=0$. Then $i_0\ge 3$ and $j_0\ge 2$, since the elements $x$ and $y$ are linearly independent mod $\mm^2$, and the elements $x^2$ and  $xy$ are linearly independent mod $\mm^3$. If all $\xi_i=0$ (resp. $\mu_j=0$), then we obtain a contradiction multiplying by $x^{c-j_0}$ (resp. by $x^{a-i_0}$). If $i_0\le c$, then we get a contradiction multiplying by $y$. Hence $i_0\ge c+1$. Moreover, if $a-i_0\ne c-j_0$, then multiplying by an appropriate power of $x$ again leads to a contradiction. It means that $i_0=a-d$ and $j_0=c-d$ for some $d\ge 0$; here $a-d>c$. Assume that $d$ is maximal over all relations. So there is a relation  $x^{a-d}\chi_1 - x^{c-d} \chi_2 y = 0$, where $\chi_i = \alpha_{i0} + \alpha_{i1} x + \ldots$,
$\alpha_{i0} \ne 0$, or equivalently, $x^{a-d} - x^{c-d}\chi y = 0$, where $\chi := \chi_1^{-1} \chi_2$. The replacement of $y$ by $\chi y$ provides the relation $x^{a-d} - x^{c-d}y = 0$. One can subtract the relation of this form from any other relation with corresponding lower terms to cancel the lower terms or to obtain a contradiction. From Lemma~\ref{reduce} and the inequality $a-d > c > d+1 \ge 1$ it follows that $a-d = 3 $ and $c=2$. In this case $d=0$, $a=3$, $c=2$, and our algebra can be written as $\KK[x,y]/(y^2, x^3y, x^3-x^2y)$. Replace the variable  $x$  by $-3x+y$ to show that this algebra is isomorphic to $D$.

\smallskip

{\it Case 5.2}: here $R$ is a factor algebra of the algebra $\KK[x,y]/(x^{a+1},y^2-x^e,x^{c+1}y)$. If $R$ coincides with this algebra, then Lemma~\ref{reduce} reduces the proof to the case $e=3$. The inequalities $a \ge c \ge a-e$ imply four possible variants: $c = a,\, a-1,\, a-2,\, a-3\,$. In the last case it follows from $c\ge 1$ that $a\ge 4$ and we get the algebras $C_a$, $C_a^1$, $C_a^2$ and $C_a^3$ respectively.

Assume there is another relation $\sum_{i\ge i_0} \xi_ix^i+\sum_{j\ge j_0} \mu_jx^jy=0$. As in the previous case one shows that $i_0\ge 3$, $j_0\ge 2$ and $a-i_0=c-j_0=d$. Again set the value of $d$ maximal over all relations; thus the equation of described form implies all the relations in the algebra $R$. Multiply the equation by $y$ to obtain another equation with lower terms $x^{c-d+e}$ and $x^{a-d}y$. It is compatible with the others whenever there are extra restrictions: two inequalities $c-d+e>a$ and $a-d>c$ or one equality $a-(c-d+e)=c-(a-d)$. The last formula is equivalent to
$e=2(a-c)$. Here $e\ge 4$, hence $a-c\ge 2$. It means that $a-d \ge c-d+2$; therefore we have $a-d\ge 4$, $c-d \ge 2$. From Lemma~\ref{reduce} it follows that there are no algebras of modality one in this case.

Thus the restrictions $c-d+e>a$ and $a-d>c$ hold. Lemma~\ref{reduce} reduces the remainder to the following cases:

\smallskip

{\it Case 5.2.1}: $e=3$. From the inequalities $c+3-d>a$ and $a-d>c$ it follows that $3-2d>0$, hence $d=0,1$. If $d=1$, then $c+2>a>c+1$, and we come to a contradiction. If $d=0$, then $c+3>a>c$; in other words, we have $c=a-1$ or $c=a-2$. If the second possibility holds, then
$a=c+2\ge 2+d+2 = 4$. Denote the obtained algebras by $D_a^1$ and $D_a^2$ respectively. We are going to prove that they are isomorphic to $C_{a-1}$ and $C_a^3$.

\smallskip

{\it Case 5.2.1.1}: $D_a^1\cong C_{a-1}$ $(a\ge 4)$ and $D_3^1 \cong D$. Consider the following variable  change in $C_{a-1}$: $x'=ux+vy$, $y'=wx^2+zy$, where $u,v,w,z$ are some invertible polynomials in $x$. The first required equality $(x')^ay'=0$ holds automatically. The second condition $(x')^a=(x')^{a-1}y\ne 0$ gives an equality for the constant terms: $av_0=z_0$. The third relation $(x')^3=(y')^2$ provides the equation
$$
x^3(u^3-z^2)+x^4(3uv^2-w^2)+x^2y(3u^2v-2wz)+x^3yv^3=0.
$$
Since the $u,v,w,z$ do not contain $y$, one obtains all the coefficients of $u,v,w,z$ considering the coefficients at $x^3$, $x^2y$, $x^4,$ $x^3y,\dots$. Thus we determine the required generators and prove the isomorphism.

\smallskip

{\it Case 5.2.1.2}: $D_a^2\cong C_a^3.$ The proof is similar. Consider $x'=ux+vy$ and $y'=wx^2+zy$ in $C_a^3$, where $u,v,w,z$ are some invertible polynomials in $x$. The condition $(x')^{a-1}y'=0$ holds automatically; the relation $(x')^a=(x')^{a-2}y\ne 0$ results in the equality for the constant terms $u_0^3=u_0w_0+(a-2)v_0z_0$; the last equality $(x')^3=(y')^2$ provides the relation
$$
x^3(u^3-z^2)+x^4(3uv^2-w^2)+x^2y(3u^2v-2wz)+x^3yv^3=0,
$$
which determines $u,v,w,z$ like in the previous case.

\smallskip

{\it Case 5.2.2}: $a-d=3$. Here we obtain $a=3$, hence $e=3$, and this case reduces to 5.2.1.

\smallskip

Now we have to prove that the algebras $C_a$, $C_a^1$, $C_a^2$, $C_a^3$, $D$ and $E$ are not pairwise isomorphic. The algebras $C_a$, $C_a^3$ and $D$ are Gorenstein. To prove that they are not isomorphic let us calculate their dimensions:
$$
\dim C_a \ = \ 2(a+1) \ \ge \ 8, \quad \dim C_a^3 \ = \ 2a-1 \ \ge \ 7, \quad \dim D \ = \ 6.
$$

For non-Gorenstein algebras dimensions are
$$
\dim C_a^1 \ = \ 2a+1 \ \ge \ 7, \quad
\dim C_a^2 \ = \ 2a \ \ge \ 6, \quad \dim E \ = \ 5.
$$
This completes the proof of Theorem~\ref{Tclas}.
\end{proof}

\begin{corollary}
For $n\ge 5$ there are $(n+1)$ pairwise non-isomorphic local $(n+1)$-dimensional algebras
of modality one, while for $n=2$ we have $1$ algebra, for $n=3$ there are $2$ algebras, and for
$n=4$ there are $4$ algebras.
In particular, for any $n>0$ the number of equivalence classes of generically transitive $\GG_a^n$-actions of modality one on $\PP^n$ is finite.
\end{corollary}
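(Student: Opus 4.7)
The plan is to extract from Theorem~\ref{Tclas} the dimension formula for each of the eight families of algebras and then enumerate those of dimension $n+1$ for each $n$.

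First I would write down the Hilbert dimensions explicitly. Using the bases indicated in the proof of Theorem~\ref{Tclas}, one gets
\[
\dim A_{a,b}=a+b+1,\qquad \dim B_{a,b}=a+b,
\]
\[
\dim C_a=2(a+1),\quad \dim C_a^1=2a+1,\quad \dim C_a^2=2a,\quad \dim C_a^3=2a-1,\quad \dim D=6,\quad \dim E=5,
\]
together with the index restrictions $a\ge b\ge 1$ for $A$, $a\ge b\ge 2$ for $B$, $a\ge 3$ for $C_a,C_a^1,C_a^2$, and $a\ge 4$ for $C_a^3$. These numbers were already computed at the end of the proof of Theorem~\ref{Tclas} for the Gorenstein/non-Gorenstein comparison, so no new work is needed.

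Next I would count, for $m:=n+1$, the number of pairs $(a,b)$ with the prescribed inequalities giving $\dim=m$. For the two two-parameter families this is elementary:
\[
\#\{A_{a,b}:\dim=m\}=\Big\lfloor\tfrac{m-1}{2}\Big\rfloor\text{ for }m\ge 3,\qquad
\#\{B_{a,b}:\dim=m\}=\Big\lfloor\tfrac{m}{2}\Big\rfloor-1\text{ for }m\ge 4.
\]
Each of $C_a,C_a^1,C_a^2,C_a^3$ (respectively $D$, $E$) contributes at most one algebra of a given dimension, in the parity dictated by its dimension formula, and only when the index restriction is met. In particular $D$ is the only algebra outside of the $A,B$ families in dimension $6$, and $E$ is the only extra one in dimension $5$.

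Finally I would assemble the totals. For $n=2,3,4$ the small cases $m=3,4,5$ are handled by hand, yielding $\{A_{1,1}\}$, $\{A_{2,1},B_{2,2}\}$, $\{A_{3,1},A_{2,2},B_{3,2},E\}$ and thus $1,2,4$. For $n\ge 5$ split on the parity of $m=n+1$. If $m$ is odd (so $n\ge 6$ is even), the list $A,B,C^1,C^3$ contributes $\lfloor(m-1)/2\rfloor+(\lfloor m/2\rfloor-1)+1+1=\tfrac{n}{2}+\tfrac{n}{2}-1+2=n+1$; the other families have the wrong parity. If $m$ is even, the list $A,B,C^2$ contributes $\tfrac{n-1}{2}+\tfrac{n-1}{2}+1=n$, to which one adds exactly one algebra from $\{D,C_a\}$, namely $D$ when $n=5$ and $C_{(n-1)/2}$ when $n\ge 7$, again giving $n+1$. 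The pairwise non-isomorphism is part of Theorem~\ref{Tclas}. No step is a serious obstacle; the only care needed is to verify the boundary values $n=5,6,7$ by hand so that the parity case analysis agrees with the small-dimensional algebras $D$ and $E$.
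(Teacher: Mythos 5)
Your proposal is correct and is exactly the argument the paper leaves implicit: the corollary follows from Theorem~\ref{Tclas} by listing the dimensions $\dim A_{a,b}=a+b+1$, $\dim B_{a,b}=a+b$, $\dim C_a=2(a+1)$, $\dim C_a^1=2a+1$, $\dim C_a^2=2a$, $\dim C_a^3=2a-1$, $\dim D=6$, $\dim E=5$ and counting those equal to $n+1$ subject to the index restrictions. Your parity split and the boundary checks at $n=2,3,4,5,6,7$ all come out right, so there is nothing to add.
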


Let us show that there is an infinite family of pairwise non-isomorphic 7-dimensional local algebras of modality two. Consider algebras with Hilbert-Samuel sequence $(1,3,3)$. Here multiplication is determined by a bilinear symmetric map $\mm/\mm^2\times\mm/\mm^2\to\mm^2$. Such maps form a 18-dimensional space; the group  $\GL(3)\times\GL(3)$ acts here with a one-dimensional inefficiency kernel. It means that there are infinitely many generic pairwise non-isomorphic algebras of this type. Let us prove that modality of a generic algebra equals two. By Proposition ~\ref{pr1}, $\modd(R)=\max_k(\dim C_k+k)-(n+1)$; here the annihilator of a generic point $x\in\mm$ coincides with $\mm^2$, so $k=3$ and $C_3=\mm$. When $k=4$ (resp. 5), the dimension of $C_k$ decreases by 1 (resp. at least by 2). Finally, for $k=6$ the space $C_k$ coincides with $\mm^2$. One concludes that $\modd(R)=(6+3)-7=2$.

\smallskip

Note that for every monomial ideal $I$ the finite-dimensional algebra $\KK[x,y]/I$ determines the Young diagram with boxes assigned to all monomials not in $I$. Conversely, to every Young diagram one assigns a local finite-dimensional monomial algebra with two generators. The diagrams $(n)$ and $(1,\ldots,1)$ correspond to modality zero.

\bigskip

\begin{picture}(100,10)
\put(100,20){\line(1,0){20}}
\put(100,20){\line(0,1){10}}
\put(100,30){\line(1,0){20}}
\put(110,20){\line(0,1){10}}
\put(120,20){\line(0,1){10}}
\put(120,20){\line(1,0){3}}
\put(120,30){\line(1,0){3}}
\put(125,24){\circle*{2}}
\put(130,24){\circle*{2}}
\put(135,24){\circle*{2}}
\put(140,20){\line(1,0){20}}
\put(150,20){\line(0,1){10}}
\put(140,20){\line(0,1){10}}
\put(140,20){\line(-1,0){3}}
\put(140,30){\line(-1,0){3}}
\put(140,30){\line(1,0){20}}
\put(160,20){\line(0,1){10}}
\end{picture}
\quad
\begin{picture}(60,50)
\put(100,0){\line(0,1){20}}
\put(100,0){\line(1,0){10}}
\put(110,0){\line(0,1){20}}
\put(100,10){\line(1,0){10}}
\put(100,20){\line(1,0){10}}
\put(100,20){\line(0,1){3}}
\put(110,20){\line(0,1){3}}
\put(100,40){\line(0,-1){3}}
\put(110,40){\line(0,-1){3}}
\put(104,25){\circle*{2}}
\put(104,30){\circle*{2}}
\put(104,35){\circle*{2}}
\put(100,40){\line(0,1){20}}
\put(100,50){\line(1,0){10}}
\put(100,40){\line(1,0){10}}
\put(110,40){\line(0,1){20}}
\put(100,60){\line(1,0){10}}
\end{picture}

\bigskip

By Theorem~\ref{Tclas}, diagrams of modality one are exactly $(s,1,\ldots,1)$, $(2,2)$, $(3,2)$, $(2,2,1)$,
$(3,3)$ and $(2,2,2)$.

\bigskip

\begin{picture}(40,60)
\put(40,50){\line(1,0){20}}
\put(40,40){\line(0,1){20}}
\put(40,60){\line(1,0){20}}
\put(50,40){\line(0,1){20}}
\put(80,50){\line(0,1){10}}
\put(60,50){\line(0,1){10}}
\put(60,50){\line(1,0){3}}
\put(60,60){\line(1,0){3}}
\put(80,50){\line(-1,0){3}}
\put(80,60){\line(-1,0){3}}
\put(40,20){\line(1,0){10}}
\put(40,40){\line(1,0){10}}
\put(40,20){\line(0,1){3}}
\put(50,20){\line(0,1){3}}
\put(40,40){\line(0,-1){3}}
\put(50,40){\line(0,-1){3}}
\put(65,54){\circle*{2}}
\put(70,54){\circle*{2}}
\put(75,54){\circle*{2}}
\put(80,50){\line(1,0){20}}
\put(90,50){\line(0,1){10}}
\put(80,60){\line(1,0){20}}
\put(100,50){\line(0,1){10}}
\put(44,25){\circle*{2}}
\put(44,30){\circle*{2}}
\put(44,35){\circle*{2}}
\put(40,0){\line(0,1){20}}
\put(50,0){\line(0,1){20}}
\put(40,0){\line(1,0){10}}
\put(40,10){\line(1,0){10}}
\end{picture}
\qquad
\begin{picture}(20,20)
\put(60,20){\line(0,1){20}}
\put(60,20){\line(1,0){20}}
\put(70,20){\line(0,1){20}}
\put(60,30){\line(1,0){20}}
\put(60,40){\line(1,0){20}}
\put(80,20){\line(0,1){20}}
\end{picture}
\qquad
\begin{picture}(30,20)
\put(50,20){\line(0,1){20}}
\put(50,20){\line(1,0){20}}
\put(60,20){\line(0,1){20}}
\put(50,30){\line(1,0){30}}
\put(50,40){\line(1,0){30}}
\put(70,20){\line(0,1){20}}
\put(80,30){\line(0,1){10}}

\end{picture}
\qquad
\begin{picture}(30,20)
\put(75,20){\line(0,1){20}}
\put(75,20){\line(1,0){30}}
\put(85,20){\line(0,1){20}}
\put(75,30){\line(1,0){30}}
\put(75,40){\line(1,0){30}}
\put(95,20){\line(0,1){20}}
\put(105,20){\line(0,1){20}}
\end{picture}
\quad
\begin{picture}(20,30)
\put(0,15){\line(1,0){10}}
\put(0,15){\line(0,1){30}}
\put(0,25){\line(1,0){20}}
\put(10,15){\line(0,1){30}}
\put(20,25){\line(0,1){20}}
\put(0,35){\line(1,0){20}}
\put(0,45){\line(1,0){20}}
\end{picture}
\quad
\begin{picture}(20,30)
\put(40,15){\line(1,0){20}}
\put(40,15){\line(0,1){30}}
\put(40,25){\line(1,0){20}}
\put(50,15){\line(0,1){30}}
\put(60,15){\line(0,1){30}}
\put(40,35){\line(1,0){20}}
\put(40,45){\line(1,0){20}}
\end{picture}

\bigskip

\noindent It would be interesting to calculate the modality for an arbitrary Young diagram.
Another intriguing problem is to connect Theorem~\ref{Tclas} with the classification
of singularities of modality one in the sense of \cite{AGV}.

Let us also mention that finite-dimensional 2-generated local algebras occur in the study of local punctual Hilbert schemes and of pairs of commuting nilpotent matrices. Lately there were many works in this direction, see~\cite{BI} and references therein. Let $(A,B)$ be a pair
of commuting nilpotent $n\times n$ matrices with a fixed cyclic vector. If $R=\KK[A,B]$, then $\modd(R)$ may be interpreted as the maximal number of parameters in a continuous family of subspaces invariant for both $A$ and $B$. The variety of pairs of commuting nilpotent matrices with a fixed cyclic vector admits a natural action of the general linear group. It is shown in \cite[Theorem~1.9]{Na} that if we consider the canonical map from the punctual Hilbert scheme $\text{Hilb}^n(\mathbb{A}^2)$ to the symmetric power $\text{Sym}^n(\mathbb{A}^2)$, the quotient by the action mentioned above is the fibre $H[n]$ over the point $np$, where $p$ is a point on $\mathbb{A}^2$. Roughly speaking, the local punctual Hilbert scheme $H[n]$
parametrizes 2-generated local algebras of dimension $n$. The Hilbert-Samuel sequence defines a stratification of $H[n]$ by locally closed
subsets, see~\cite[Proposition~1.6]{Ia}. Applications of this stratification in linear algebra may be found in \cite{BI}.
The modality being another numerical invariant of a local algebra gives rise to other
stratification of $H[n]$, and it may be interesting to study its properties.


\section{$\GG_a^n$-actions on degenerate quadrics}
\label{sec3}

We return to the settings of Section~\ref{sec1}.
Given the projectivization $\PP^m$ of a rational $\GG_a^n$-module and a point $x\in\PP^m$,
the closure of the orbit $\GG_a^n\cdot x$ is a projective variety. Since the closure is $\GG_a^n$-invariant,
it inherits a (generically transitive) $\GG_a^n$-action.
Conversely, any normal projective $\GG_a^n$-variety can be equivariantly embedded into the projectivization of a rational $\GG_a^n$-module \cite[Section~1.2]{VP}.
If the action is generically transitive, then one may assume that this is the orbit closure of a cyclic vector.
Therefore any generically transitive $\GG_a^n$-action on a normal projective variety is determined by a pair $(R,U)$, see Theorem~\ref{T1}. This time the correspondence is not bijective, since the orbit closure is not necessarily normal and the equivariant embedding of a projective variety into the projectivization of a $\GG_a^n$-module is not unique. For example, the pair $(\KK[x]/(x^{m+1}), \langle x\rangle)$ determines a standard $\GG_a$-action on $\PP^1$ for all $m\ge 1$.

Let $R$ be a local $(m+1)$-dimensional algebra
with a maximal ideal $\mm$ and $U$ be an $n$-dimensional subspace of $\mm$ that generates the algebra $R$.
A pair $(R,U)$ is called an {\it $H$-pair} if $U$ is a hyperplane in $\mm$, or, equivalently, $m=n+1$. Such pairs correspond to generically transitive $\GG_a^n$-actions on hypersurfaces in $\PP^{n+1}$. The  degree of the hypersurface is called the {\it degree} of the $H$-pair.

An $H$-pair of degree 2 is called {\it quadratic}. In this case we get a generically transitive $\GG_a^n$-action of a projective quadric
$$
Q(n,k):=\{[z_0:z_1:\ldots:z_{n+1}] \ ; \ q(z_0,z_1,\ldots,z_{n+1})=0 \} \subset \PP^{n+1},
$$
where $q$ is a quadratic form of rank $k+2$ with $1\le k \le n$. In this notation the non-degenerate quadric $Q_n\subset\PP^{n+1}$ is $Q(n,n)$;
see \cite[Chapter~I, Exercise~5.12]{Ha} for basic geometric properties of projective quadrics.

\begin{example}
Consider the $H$-pair $(R,U)=(\KK[x]/(x^5), \langle x,x^2,x^4\rangle)$. Denote by $X_1$, $X_2$ and $X_3$
the images of the elements $x$, $x^2$ and $x^4$ in the algebra $R$. We have
$$
\exp(a_1X_1+a_2X_2+a_3X_3) \ = \
$$
$$
\ = \
1+a_1X_1+a_2X_2+a_3X_3+\frac{a_1^2}{2}X_2+\frac{a_2^2}{2}X_3+a_1a_2X_1^3+\frac{a_1^3}{6}X_1^3+\frac{a_1^2a_2}{2}X_3+
\frac{a_1^4}{24}X_3,
$$
and the $\GG_a^3$-orbit of the line $\langle 1\rangle$ has the form
$$
\left\{\left[\,1\,:\,a_1\,:\,a_2+\frac{a_1^2}{2}\,:\,a_1a_2+\frac{a_1^3}{6}\,:\,a_3+\frac{a_2^2}{2}+\frac{a_1^2a_2}{2}+\frac{a_1^4}{24}\,\right]
\, ; \, a_1,a_2,a_3\in\KK\right\}.
$$
In homogeneous coordinates $[z_0:z_1:z_2:z_3:z_4]$ on $\PP^4$ our pair determines a hypersurface given by the equation:
$$
3z_0^2z_3-3z_0z_1z_2+z_1^3=0.
$$
\end{example}

It is proved in \cite{Sha} that the quadric $Q_n$ admits a unique (up to isomorphism) generically transitive $\GG_a^n$-action corresponding to the pair $(A_n,U_n)$, where
$$
A_n=\KK[y_1,\dots,y_n]/(y_iy_j,\, y_i^2-y_j^2\, ;\, i\ne j) \ \ \text{and} \ \ U_n=\langle Y_1,\dots,Y_n\rangle
$$
for $n>1$, and $A_1=\KK[y_1]/(y_1^3)$, $U_1=\langle Y_1\rangle$.
Here by $Y_i$ we denote the image of the element $y_i$ in $A_n$. We describe below the $H$-pairs that determine generically transitive $\GG_a^n$-actions on degenerate quadrics. An algebra homomorphism $\phi:R\to R'$ is called a {\it homomorphism of $H$-pairs} $\phi:(R,U)\to (R',U')$ if $\phi(U)=U'$. Since the subspace $U'$ generates the algebra $R'$, a homomorphism of $H$-pairs is a surjective homomorphism of algebras.

\begin{proposition} \label{qpr1}
An $H$-pair $(R,U)$ determines a generically transitive $\GG_a^n$-action on the quadric $Q(n,k)$ if and only if there exists a homomorphism of $H$-pairs $\phi:(R,U)\to (A_k,U_k)$.
\end{proposition}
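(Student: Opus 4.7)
The plan is to use the linear projection from the vertex of $Q(n,k)$, identifying this vertex algebraically as the radical of the defining quadratic form. For the \emph{if} direction, given a surjective homomorphism of $H$-pairs $\phi: (R,U) \to (A_k, U_k)$ with kernel $I$, the equality $\phi(U) = U_k$ forces $\dim(U \cap I) = n - k = \dim I$, so $I \subseteq U$. The linear projection $\pi: \PP(R) \setminus \PP(I) \to \PP(R/I) = \PP(A_k)$ is $\GG_a^n$-equivariant, and sends the orbit of $[1]$ to the orbit whose closure in $\PP(A_k)$ is $Q_k$. Hence the orbit closure in $\PP(R)$ lies in $\pi^{-1}(Q_k)$, which is cut out by the same quadratic equation and is therefore a quadric of rank $k+2$; a dimension count ($k + (n-k-1) + 1 = n$) yields equality with $Q(n,k)$.

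For the \emph{only if} direction, let $q$ be the quadratic form on $R$ of rank $k+2$ defining $Q(n,k)$, and set $I := \ker q$, so that $\PP(I)$ is the vertex of $Q(n,k)$. Since $\GG_a^n$ is connected with no nontrivial characters, it preserves $q$ literally, and differentiation gives the skew-adjoint identity
$$q(ur,s) + q(r,us) = 0 \qquad (u \in U,\ r,s \in R).$$
Applied with $r \in I$, this shows $uI \subseteq I$, and since $U$ together with $1$ generates $R$ as an algebra, the radical $I$ is in fact an ideal of $R$.

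The main obstacle will be to prove $I \subseteq U$, which is needed to turn $(R/I, (U+I)/I)$ into an $H$-pair of dimension $k+2$. The key step I plan is to combine $q(1,1) = 0$ (from $[1] \in Q(n,k)$) with $q(U,1) = 0$ (the identity above specialized to $r=s=1$), which together give $\KK\cdot 1 + U \subseteq 1^{\perp}$. Since $[1]$ lies in the open orbit and hence outside the vertex, $1 \notin I$, so $1^{\perp}$ has dimension $n+1$; this matches $\dim(\KK\cdot 1 + U)$, so the two subspaces coincide. Intersecting with $\mm$ yields $I \subseteq U$.

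With $I \subseteq U$ in hand, $(R/I, U/I)$ is an $H$-pair of dimension $k+2$, and the equivariant projection $\pi$ identifies the associated hypersurface with $\pi(Q(n,k) \setminus \PP(I)) = Q_k$, the smooth non-degenerate quadric. The uniqueness theorem from~\cite{Sha} recalled just above the proposition then gives $(R/I, U/I) \cong (A_k, U_k)$, and composing the quotient map with this isomorphism produces the required $\phi$.
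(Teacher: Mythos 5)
Your proposal is correct and follows essentially the same route as the paper: skew-symmetry of multiplication by elements of $U$ with respect to the invariant bilinear form, the kernel $I$ being an ideal contained in $U$, reduction to $(R/I,U/I)\cong(A_k,U_k)$ via the uniqueness theorem of~\cite{Sha}, and pulling back the non-degenerate form for the converse. The only (minor) variation is your direct argument for $I\subseteq U$ via $1^{\perp}=\KK\cdot 1\oplus U$, where the paper instead assumes $\mm=U+I$ and derives $B\equiv 0$, contradicting $\operatorname{rk}B\ge 3$; both are valid.
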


\begin{proof}
Suppose that a pair $(R,U)$ determines a generically transitive $\GG_a^n$-action on $Q(n,k)$. Let $q$ be the corresponding quadratic $\GG_a^n$-invariant form and $B$ the associated bilinear symmetric form. The operator of multiplication by an element $a\in U$ is skew-symmetric with respect to the form $B$, i.e., $B(ab,c)+B(b,ac)=0$ for every $b,c\in R$. If an element $c$ is in the kernel $I$ of the form $B$, then the first term is zero, and $ac\in I$. The subspace $U$ generates the algebra $R$, so $I$ is an ideal in $R$.

Let us show that $I\subseteq U$. Otherwise one has $\mm=U+I.$ Putting $b=c=1$ in the condition of skew-symmetry, we get $B(1,U)=0$ and  hence $B(1,\mm)=0$. For every $u_1,u_2\in U$ one has $B(u_1,u_2)+B(1,u_1u_2)=0$. It means that $B$ restricted to $U$ is zero. Note that $B(1,1)=0$, since the unit corresponds to a point on the quadric. But the form $B$ should be of rank at least 3, a contradiction.

The form $q$ induces a non-degenerate quadratic form $\overline{q}$ on the factor algebra $R/I$. The condition of skew-symmetry shows that $\overline{q}$ is $\GG_a^k$-invariant, where the $\GG_a^k$-action is given by the pair $(R/I,U/I)$. By \cite[Theorem~3]{Sha} this pair is isomorphic to $(A_k,U_k)$, and the homomorphism we need is the projection $(R,U)\to (R/I,U/I)$.

\smallskip

Conversely, assume that $\phi:(R,U)\to (A_k,U_k)$ is a homomorphism of $H$-pairs. One can lift the non-degenerate $\GG_a^k$-invariant form $\overline{q}$ on $A_k$ to a $\GG_a^n$-invariant form $q$ on $R$ by putting the kernel $I$ of the form $q$
equals $\Ker(\phi).$ We know that the $H$-pair $(R,U)$ determines a generically transitive $\GG_a^n$-action on some hypersurface and that $q(1)=0$; so this hypersurface is the quadric $q=0$.
\end{proof}

\begin{corollary} \label{qcor}
If $\phi:(R,U)\to(R',U')$ is a homomorphism of $H$-pairs and the $H$-pair $(R',U')$ is quadratic, then the $H$-pair $(R,U)$ is quadratic as well.
\end{corollary}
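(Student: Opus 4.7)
The plan is to reduce the corollary to Proposition \ref{qpr1} by observing that homomorphisms of $H$-pairs compose. First I would apply Proposition \ref{qpr1} to the quadratic $H$-pair $(R',U')$: this gives a generically transitive $\GG_a^{n'}$-action on some quadric $Q(n',k) \subset \PP^{n'+1}$, and, by the proposition, a homomorphism of $H$-pairs $\psi \colon (R',U') \to (A_k, U_k)$, where $n' = \dim U'$ and $k$ is the rank of the associated bilinear form minus two.

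Next I would verify that the composition $\psi \circ \phi \colon R \to A_k$ is itself a homomorphism of $H$-pairs $(R,U) \to (A_k,U_k)$. This is immediate from the definitions: since $\phi(U) = U'$ and $\psi(U') = U_k$, we have $(\psi \circ \phi)(U) = U_k$, and the composition of algebra homomorphisms is an algebra homomorphism. (The surjectivity remark recorded just before Proposition \ref{qpr1} then shows $\psi \circ \phi$ is automatically surjective, though this is not logically needed for what follows.)

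Finally, I would invoke the reverse implication of Proposition \ref{qpr1} applied to $(R,U)$ together with the homomorphism $\psi \circ \phi \colon (R,U) \to (A_k, U_k)$. The conclusion of that direction is precisely that $(R,U)$ determines a generically transitive $\GG_a^n$-action on the quadric $Q(n,k) \subset \PP^{n+1}$, where $n = \dim U$. Hence the hypersurface cut out by $(R,U)$ has degree two, i.e.\ $(R,U)$ is quadratic.

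There is really no obstacle here beyond checking that the notion of \emph{homomorphism of $H$-pairs} is closed under composition, which is transparent from the definition. The substantive content of the corollary is already packaged in Proposition \ref{qpr1}, so the entire argument amounts to composing arrows in the category whose objects are $H$-pairs and whose morphisms are $H$-pair homomorphisms.
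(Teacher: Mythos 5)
Your proof is correct and is exactly the argument the paper intends: the corollary is stated without proof precisely because it follows by composing the given homomorphism with the one supplied by Proposition~\ref{qpr1} for $(R',U')$ and then applying the converse direction of that proposition to $(R,U)$. The only implicit point, which you handle correctly, is the identification of ``quadratic'' with ``determines a generically transitive action on some $Q(n,k)$'' (an irreducible degree-two orbit closure forces the defining form to have rank at least~$3$).
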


\begin{theorem} \label{qth}
An $H$-pair $(R,U)$ is quadratic if and only if $\mm^3\subset U$.
\end{theorem}

\begin{proof}
If $\mm^3\subset U$, then $(R/\mm^3, U/\mm^3)$ is an $H$-pair. If we prove that any pair $(R,U)$ with $\mm^3=0$ is quadratic, then Theorem~\ref{qth} follows from Corollary~\ref{qcor}. The algebra $R$ can be decomposed into the direct sum of its subspaces:  $R=\langle 1\rangle\oplus U\oplus\langle\mu\rangle$, where $\mu\in\mm^2$. Define a bilinear symmetric form $B$ on the space $R$ by $B(1,1)=0$, $B(1,\mu)=-1$, $B(1,U)=0$, and for every $x,y\in\mm$ obtain the value $B(x,y)$ from the equality $xy=u+B(x,y)\mu$, where $u\in U$. In this case the rank of the form $B$ is at least 3. Let us check that the operator of multiplication by an element $u\in U$ is skew-symmetric with respect to $B$. Fix a basis $e_0=1$, $e_1,\dots,e_n\in U$ and $e_{n+1}=\mu$ in the algebra $R$. It suffices to show that $B(ue_i,e_j)+B(e_i,ue_j)=0$ for all $i\le j$. Here both terms are zeroes apart from the case $i=0$, $j=1,\dots,n$, and $B(ue_0,e_j)$ is the coefficient at $\mu$ in the decomposition of the
 element $ue_j$, while $B(e_0,ue_j)$ is opposite to the coefficient at $\mu$ for the same element.

Conversely, suppose that an $H$-pair $(R,U)$ is quadratic. Then for some $k$ there exists a homomorphism
of $H$-pairs $\phi:(R,U)\to(A_k,U_k)$. If $\mm^3$ is not contained in $U$, then its image $\phi(\mm^3)$ is not contained in $U_k$, hence $\phi(\mm^3)\ne 0$. But in $A_k$ the cube of the maximal ideal is zero, a contradiction.
\end{proof}

\begin{corollary}
Consider a homomorphism $\phi:(R,U)\to(R',U')$ of $H$-pairs. The $H$-pair $(R,U)$ is quadratic if and only if the $H$-pair $(R',U')$ is quadratic.
\end{corollary}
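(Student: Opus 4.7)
The plan is to read this corollary as a straightforward consequence of Theorem~\ref{qth} combined with Corollary~\ref{qcor}. One direction, namely ``if $(R',U')$ is quadratic then $(R,U)$ is quadratic,'' is already the content of Corollary~\ref{qcor}. So the only new work is the converse: assuming $(R,U)$ is quadratic, deduce that $(R',U')$ is quadratic.

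For this converse, I would use the criterion of Theorem~\ref{qth}: I need to show that $(\mm')^3 \subset U'$, given that $\mm^3 \subset U$. The key observation is that a homomorphism of $H$-pairs $\phi:(R,U) \to (R',U')$ is by definition a surjective algebra homomorphism satisfying $\phi(U) = U'$ (this surjectivity is stated in the paragraph preceding Proposition~\ref{qpr1}). Since $\phi$ sends $1$ to $1$ and is surjective, and since both $R$ and $R'$ are local with the maximal ideal being the kernel-complement of the residue field, one checks that $\phi(\mm) = \mm'$; this is the only small verification involved.

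Granting $\phi(\mm) = \mm'$, we get $\phi(\mm^3) = (\phi(\mm))^3 = (\mm')^3$. The hypothesis $\mm^3 \subset U$ then yields
\[
(\mm')^3 \;=\; \phi(\mm^3) \;\subset\; \phi(U) \;=\; U',
\]
and Theorem~\ref{qth} implies that $(R',U')$ is quadratic.

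There is essentially no obstacle: once the bookkeeping about surjectivity and the identification $\phi(\mm) = \mm'$ is in place, the result reduces to an image chase. The only thing worth stating clearly in the write-up is that, combined with Corollary~\ref{qcor}, this shows quadraticity is an invariant that transfers in both directions along homomorphisms of $H$-pairs.
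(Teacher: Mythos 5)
Your proof is correct and follows exactly the route the paper intends: the backward direction is Corollary~\ref{qcor}, and the forward direction is the image chase $(\mm')^3=\phi(\mm^3)\subset\phi(U)=U'$ via Theorem~\ref{qth}, using surjectivity of $\phi$ to get $\phi(\mm)=\mm'$. The paper states this corollary without proof as an immediate consequence of those two results, so nothing further is needed.
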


Let us take a closer look at pairs $(R,U)$ determining generically transitive $\GG_a^n$-actions on the quadric $Q(n,n-1)$. There is a homomorphism $\phi:(R,U)\to(A_{n-1},U_{n-1})$ described in Proposition ~\ref{qpr1}. Its kernel $I$ should be one-dimensional, we have $\mm I=0$ and $\Ann(\mm)=\langle I,C\rangle$,
where $C\in \mm$ is an element with $\phi(C)=Y_1^2$. The multiplication determines a bilinear map $(U/I)\times(U/I)\to\Ann(\mm)$. Let us fix a basis vector $P$ in $I$ and obtain two bilinear symmetric forms $B_1$ and $B_2$ on $U/I$:
$$
xy=B_1(x+I,y+I)P+B_2(x+I,y+I)C \ \ \text{for} \ \text{all} \ x,y\in U.
$$
The form $B_2$ is non-degenerate. Not the vectors $P$ and $C$, but the line $\langle B_2\rangle$ and the linear span $\langle B_1,B_2\rangle$ are defined uniquely. If $B_1$ and $B_2$ are proportional, one can obtain $B_1=0$ by changing the basis; in this case we get the pair
$$
R=\KK[x_1,\dots,x_{n-1},p\,]/(x_ix_j, x_i^2-x_j^2, x_ip, p^2 ; i\ne j), \ \ U=\langle X_1,\dots,X_{n-1},P\rangle.
$$
Otherwise the flag $\langle B_2\rangle\subset\langle B_1,B_2\rangle$ in the space of bilinear forms on $U/I$ is well-defined. In an appropriate basis the form $B_2$ is represented by the identity matrix, and the matrix of the form $B_1$ is diagonal. One can assume that the second matrix has 1 and 0 as first two diagonal elements and other $n-3$ diagonal elements determine the parameters for isomorphism classes of the pairs. Hence there is an infinite family of pairwise non-equivalent generically transitive $\GG_a^4$-actions on the quadric $Q(4,3)$. Set the quadric $Q(4,3)$ by the equation  $z_1^2+z_2^2+z_3^2=2z_0z_5$ in $\PP^5$; the action of the element $(a_1,a_2,a_3,a_4)$ at the point $[z_0:z_2:z_2:z_3:z_4:z_5]$ is given by the formula
$$
[z_0:a_1z_0+z_1:a_2z_0+z_2:a_3z_0+z_3:\frac{a_2^2+a_3^2t+2a_4}{2}z_0+a_2z_2+ta_3z_3+z_4:
$$
$$
:\frac{a_1^2+a_2^2+a_3^2}{2}z_0+a_1z_1+a_2z_2+a_3z_3+z_5]
$$
depending on the parameter $t\in\KK$. If the forms $B_1$ and $B_2$ are assigned to the matrices $\text{diag}(1,0,t)$ and $E$, where $t\ne 0,1$, then the flag $\langle B_2\rangle\subset\langle B_1,B_2\rangle$ determines the value $t$ up to transformations  $t\to \frac{1}{t}$ and $t\to 1-t$. It means that different values $t_1$ and $t_2$ of the parameter determine equivalent actions if and only if
$$
\frac{(t_1^2-t_1+1)^3}{t_1^2(1-t_1)^2} \, = \, \frac{(t_2^2-t_2+1)^3}{t_2^2(1-t_2)^2}.
$$
The values $t=0$ and $t=1$ determine one more class of equivalent actions.


\section{Degree of a hypersurface}
\label{sec4}

The following result generalizes Theorem~\ref{qth}.

\begin{theorem} \label{dth}
Let $X$ be the closure of a generic orbit of
an effective $\GG_a^n$-action on $\PP^{n+1}$ corresponding to an $H$-pair $(R,U)$.
Then the degree of the hypersurface $X$ equals the maximal exponent $d$ such that
the subspace $U$ does not contain the ideal $\mm^d$.
\end{theorem}

\begin{proof}
Let $N$ be the maximal exponent with $\mm^N\ne 0$ and $d$ be the maximal exponent with $\mm^d\not\subset U$. Consider the flag $U_N\subseteq U_{N-1}\subseteq\dots\subseteq U_1=U$, where $U_i=\mm^i\cap U$, and construct a basis $\{e_i\}$ in $U$ coordinated with this flag. Add a vector $e\in\mm^d \setminus U$ to obtain a basis in $\mm$. For a nonzero vector $v\in\mm$ the maximal number $l$ such that $v\in\mm^l$ is called the {\it weight} of the vector and is denoted by $\omega(v)$. For example, $\omega(e)=d$. One may assume that the weights of the vectors $e_1,\dots,e_n$ do not decrease, that the vectors $e_1,\dots,e_s$ are the basis vectors of weight 1, and that the vectors $e_1,\dots,e_k$ are the basis vectors of weight $<d$. The hypersurface $X$ coincides with the closure of the projectivization of the set
$$
\{\exp(a_1e_1+\dots+a_ne_n)\, :\, a_1,\dots,a_n\in\KK\}.
$$
Suppose that
$\exp(a_1e_1+\dots+a_ne_n)=1+z_1e_1+\dots+z_ne_n+ze$.
Then $z_i=a_i+f_i(a_1,\dots,a_{r_i})$, where the weights of $e_1,\dots,e_{r_i}$ do not exceed $\omega(e_i)$. In particular, $z_1=a_1,\dots,z_s=a_s$. Assume that $i>s$. For any term $\alpha a_1^{j_1}\dots a_{r_i}^{j_{r_i}}$, $\alpha\in\KK^{\times}$, of the polynomial $f_i$ one has $j_1\omega(e_1)+\dots+j_{r_i}\omega(e_{r_i})\le\omega(e_i)$.

Observe that $a_i=z_i-f_i(a_1,\dots,a_{r_i})$. One can show by induction that any element $a_p$, $p=1,\dots,r_i$, can be expressed as a polynomial $F_p$ of degree $\le\omega(e_p)$ in $z_1,\dots,z_{r_p},z_p$. It means that $a_i$ can be expressed as a polynomial $F_i$ of degree $\le\omega(e_i)$ in $z_1,\dots,z_{r_i},z_i$. Finally, we have $z=f(a_1,\dots,a_k)$, where for every term $\alpha a_1^{j_1}\dots a_k^{j_k}$, $\alpha\in\KK^{\times}$, one has $j_1\omega(e_1)+\dots+j_k\omega(e_k)\le d$. Replace each
$a_i$, $i=1,\dots,k$ by its expression in $z_1,\dots,z_{r_i},z_i$ and obtain $z=F(z_1,\dots,z_k)$. Therefore the degree of every term of the polynomial $F$ does not exceed $d$.

For any polynomial $G(y_1,\dots,y_m)$ of degree $r$ one can multiply each term of degree $q$ by $z_0^{r-q}$ to get a homogeneous polynomial that we denote by $HG(z_0,y_1,\dots,y_m)$.  The polynomial $\widetilde{F}(z_1,\dots,z_k,z)=z-F(z_1,\dots,z_k)$ being linear in $z$ is irreducible.
Thus the closure of the orbit $\GG_a^n\langle 1\rangle$ is given by the equation $H\widetilde{F}(z_0,z_1,\dots,z_k,z)=0$, where $z_0$ denotes the first coordinate in the basis $\{1,e_1,\dots,e_n,e\}$ of the algebra $R$.

We have to show that there is a term of degree $d$ in the polynomial $F$. We use induction on $n$. If $n=1$, then the corresponding pair is $(R,U)=(\KK[y]/(y^3), \langle y\rangle)$. Here $d=2$, and the pair is quadratic by Theorem ~\ref{qth}. Suppose $n \ge 2$. Then the linear span $I=\langle e_{k+1},\dots,e_n\rangle$ is a subset of $U$ and an ideal in $R$. As we know, the polynomials $F(z_1,\dots,z_k)$ for two pairs $(R,U)$ and $(R/I, U/I)$ coincide. So the assumption is applicable if $I\ne 0$. There is only one case left, namely, $I=0$, or, equivalently, $k=n$, $\mm^d=\langle e\rangle$ and $\mm^{d+1}=0$. In this situation the element $e$ can be represented as a homogeneous form of degree $d$ in $e_1,\dots,e_s$. It is well known that every homogeneous form of degree $d$ can be decomposed as the sum of $d$-th powers of linear forms. Hence $e$ is equal to a $d$-th power of some linear combination of $e_1,\dots,e_s$. Changing the basis, one can assume that $e_1^d=e$ and that th
 e vectors $e_1,e_1^2,\dots,e_1^{d-1}$ form the subset $\{e_i\, :\, i\in \mathcal{I}\}$ of the basis $\{e_1,\dots,e_n\}$ of the space $U$. Then for any $j\notin\mathcal{I}$ the polynomial $f_j$ has no terms depending only on $a_1$. Further, if $i\in\mathcal{I}$, then $f_i$ contains the term $\frac{1}{\omega(e_i)!}a_1^{\omega(e_i)}$. One can show by induction that the polynomial $F_i$ contains some term depending on $z_i$ whenever $i\in\mathcal{I}$. So we can presume that the pair $(R,U)$ is of the form $(\KK[y]/(y^{d+1}),\langle y,y^2,\dots,y^{d-1}\rangle)$ for calculating the coefficient at $z_1^d$ in the polynomial $F(z_1,\dots,z_k)$. For this pair we have $e_1=y, e_2=y^2,\dots,e_{d-1}=y^{d-1}$ and $e=y^d$. Thus
$$
\exp(a_1e_1+\dots+a_{d-1}e_{d-1})=1+z_1e_1+\dots +z_{d-1}e_{d-1}+ze
$$
and
$$
z=\sum_{m>0}\frac{1}{m!}\sum_{\substack{i_1k_1+\dots+i_mk_m=d \\ k_1<\dots<k_m<d}} \frac{m!}{i_1!\dots i_m!}a_{k_1}^{i_1}\dots a_{k_m}^{i_m} \ = \
$$
\begin{equation} \label{for}
\ =\ \sum_{\substack{i_1k_1+\dots+i_mk_m=d\\ k_1<\dots<k_m<d}} \frac{1}{i_1!\dots i_m!}a_{k_1}^{i_1}\dots a_{k_m}^{i_m}.
\end{equation}
Recall that $a_1=z_1$ and $a_i=z_i-f_i(a_1,\dots,a_{i-1})$ as $i>1$. Here the polynomial $f_i$ is given by formula ~(\ref{for}), if we replace $d$ by $i$. Now substitute all the $a_j$'s in the polynomials $f_i$ by their expressions in $z_1,\dots,z_j$ to obtain the polynomial $F_i(z_1,\dots,z_{i-1})$. Denote by $c_i$ the coefficient at $z_1^i$ in the expression $z_i-F_i(z_1,\dots,z_{i-1})$. Then $c_1=1$ and
$$
c_p=-\sum_{\substack{i_1k_1+\dots+i_mk_m=p\\ k_1<\dots<k_m<p}} \frac{1}{i_1!\dots i_m!}c_{k_1}^{i_1}\dots c_{k_m}^{i_m}
$$
as $p>1$. These coefficients are given by the same recurrence relation as the coefficients of the series
$$
h(t)=t-\sum_{u\ge 2}\frac{1}{u!}(t-\frac{t^2}{2}+\frac{t^3}{3}-\frac{t^4}{4}+\dots)^u=c_1t+c_2t^2+c_3t^3+\dots.
$$
This is the series $t-\exp(\ln(1+t))+1+\ln(1+t)=\ln(1+t)$. It means that $c_p=\frac{(-1)^{p-1}}{p}$. So the coefficient at $z_1^d$ in the polynomial $F(z_1,\dots,z_k)$ is nonzero. This concludes the proof of Theorem~\ref{dth}.
\end{proof}

\begin{corollary}
Let $X$ be the closure of a generic orbit of
an effective $\GG_a^n$-action on $\PP^{n+1}$. Then
$\deg X \le n+1$.
\end{corollary}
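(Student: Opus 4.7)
The plan is to combine Theorem~\ref{dth} with a simple length bound on the maximal ideal $\mm$ coming from Nakayama's lemma. By Theorem~\ref{dth}, if $(R,U)$ is the $H$-pair associated to the action, then $\deg X = d$, where $d$ is the largest integer such that $\mm^d \not\subset U$. Since $X \subset \PP^{n+1}$, the algebra $R$ has dimension $n+2$, so $\dim \mm = n+1$.

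First I would bound $d$ by the nilpotency index $N$ of $\mm$, where $N$ is the maximal exponent with $\mm^N \ne 0$. Since $\mm^{N+1} = 0 \subset U$, we immediately get $d \le N$. Next I would bound $N$ itself: since $R$ is local and finite-dimensional, Nakayama's lemma guarantees that the chain $\mm \supsetneq \mm^2 \supsetneq \cdots \supsetneq \mm^N \supsetneq \mm^{N+1} = 0$ is strictly decreasing (any equality $\mm^i = \mm^{i+1}$ would force $\mm^i = 0$). Hence the $N$ strict inclusions among subspaces of $\mm$ give
\[
n+1 \;=\; \dim \mm \;=\; \sum_{i=1}^{N}\bigl(\dim \mm^i - \dim \mm^{i+1}\bigr) \;\ge\; N.
\]
Chaining these inequalities yields $\deg X = d \le N \le n+1$, which is exactly the claim.

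There is no real obstacle here; the entire content is packaged into Theorem~\ref{dth}, and the remaining step is the elementary observation that the nilpotency index of a maximal ideal is bounded by its vector-space dimension. The only thing to verify carefully is that the assumption $X \subset \PP^{n+1}$ does force $\dim R = n+2$ (so $\dim \mm = n+1$), which follows from the definition of an $H$-pair recalled just before Proposition~\ref{qpr1}.
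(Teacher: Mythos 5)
Your proof is correct and follows the only natural route, which is evidently the one the paper intends (the corollary is stated without proof as an immediate consequence of Theorem~\ref{dth}): since $\mm^{N+1}=0\subset U$ forces $d\le N$, and the strictly descending chain $\mm\supsetneq\mm^2\supsetneq\cdots\supsetneq\mm^N\supsetneq 0$ gives $N\le\dim\mm=n+1$. Your care in checking $\dim R=n+2$ from the definition of an $H$-pair is exactly the right detail to verify.
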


\section{Concluding remarks}

 We would like to mention several classes of algebraic varieties where the classification of generically transitive
$\GG_a^n$-actions may be valuable.

 Let $G$ be a connected semisimple algebraic group and $P$ be a parabolic subgroup of $G$. The homogeneous space $G/P$ is a projective variety called a (generalized) flag variety. All flag varieties admitting a generically transitive $\GG_a^n$-action are found in~\cite{Ar}. Recently E.~Feigin~\cite{Fe} has constructed a $\GG_a^n$-degeneration for arbitrary
flag varieties. On the other side, the problem of description of all generically transitive $\GG_a^n$-actions on a given flag variety is still open. Hassett-Tschinkel correspondence provides such a description for $\PP^n\cong \SL(n+1)/P_1$, where
$P_1$ is a maximal parabolic subgroup in $\SL(n+1)$ corresponding to the first simple root. By \cite{Sha},
a generically transitive $\GG_a^n$-action on the non-degenerate quadric $Q_n\cong \SO(n+2)/P_1$ is unique.
We expect further uniqueness results for the Grassmannians $\text{Gr}(k,n)$ with $2\le k\le n-2$.

 The study of generically transitive $\GG_a^n$-actions on $G$-varieties is related to classification of maximal commutative unipotent subgroups in $G$. More precisely, let $G$ be an affine algebraic group acting on a variety $X$ and $F$ be a commutative unipotent subgroup of $G$. Assume that the restricted $F$-action on $X$ is generically transitive. Then $F$ is a maximal commutative unipotent subgroup of $G$. For $G=\text{SL(n+1)}$, a characterization of maximal commutative unipotent subgroups which act generically transitively on $\PP^n$ may be obtained from~\cite[Theorem~15]{ST}.

 The classification of singular del Pezzo surfaces with a generically transitive $\GG_a^2$-action is obtained
by U.~Derenthal and D.~Loughran \cite{DL}.

 Finally let us return to toric varieties. Applying the blow-up construction, one obtains many
generically transitive $\GG_a^n$-actions on toric varieties starting from $\PP^n$. But how to get a complete
classification of $\GG_a^n$-actions? The first instructive examples are the Hirzebruch surfaces $\FF_n$. It is proved in
\cite[Proposition~5.5]{HT} that for $n>0$ the surface $\FF_n$ carries two distinct generically transitive
$\GG_a^2$-actions.

\section*{Acknowledgments}

The authors are grateful to the referee for a careful reading and many helpful suggestions and remarks.



\begin{thebibliography}{}%
%
\bibitem{AH}
K. Altmann, J. Hausen. Polyhedral divisors and algebraic torus actions.
Math. Ann. 334 (2006), no.~3, 557--607.
%
\bibitem{AHS}
K. Altmann, J. Hausen, H.~S\"uss. Gluing affine torus actions via divisorial fans.
Transform. Groups 13 (2008) no.~2, 215--242.
%
\bibitem{AGV}
V.I. Arnol'd, S.M. Gusejn-Zade, A.N. Varchenko.
Singularities of differentiable maps. Volume I:
The classification of critical points, caustics and wave fronts.
Monographs in Math. 82, Birkhauser, 1985.
%
\bibitem{Ar}
I.V. Arzhantsev. Flag varieties as equivariant compactifications of $\GG_a^n$.
Proc. Amer. Math. Soc. 139 (2011), no.~3, 783--786.
%
\bibitem{BI}
R. Basili, A. Iarrobino. Pairs of commuting nilpotent matrices, and Hilbert function.
J. Algebra 320 (2008), no.~3, 1235--1254.
%
\bibitem{Be}
F. Berchtold. Lifting of morphisms to quotient presentations.
Manuscripta Math. 110 (2003), 33--44.
%
\bibitem{BP}
V.M. Buchstaber, T.E. Panov. Torus actions and their applications in Topology and Combinatorics.
University Lectures Series 24, AMS, Providence, Rhode Island, 2002.
%
\bibitem{DL}
U. Derenthal, D. Loughran. Singular del Pezzo surfaces that are equivariant compactifications.
J. Math. Sciences (Springer) 171 (2010), no.~6, 714--724.
%
\bibitem{Ei}
D. Eisenbud. Commutative algebra with a view toward algebraic geometry.
Graduate Texts Math. 150, Springer Verlag, New York, 1995.
%
\bibitem{Fe}
E. Feigin. $\GG_a^M$ degeneration of flag varieties. arXiv:1007.0646v.2 [mathAG], 2010, 24~pp.
%
\bibitem{Fu}
W. Fulton. Introduction to toric varieties.
Annals Math. Studies 131, Princeton
University Press, Princeton, NJ, 1993.
%
\bibitem{Ha}
R. Hartshorne. Algebraic Geometry.
Graduate Texts Math.~52, Springer Verlag, New York, Berlin, 1977.
%
\bibitem{HT}
B. Hassett, Yu. Tschinkel. Geometry of equivariant compactifications of $\GG_a^n$.
Int. Math. Res. Notices 20 (1999) 1211--1230.
%
\bibitem{Hu}
J.E. Humphreys. Linear Algebraic Groups.
Graduate Texts Math.~21, Springer Verlag, New York, Heidelberg, 1975.
%
\bibitem{Ia}
A. Iarrobino. Punctual Hilbert schemes.
Mem. Amer. Math. Soc. 10 (188), 1977.
%
\bibitem{KKLV}
F. Knop, H. Kraft, D. Luna, Th. Vust.
Local properties of algebraic group actions.
Algebraische Transformationsgruppen und Invariantentheorie,
63--75, DMV Sem. 13, Birkh\"auser, Basel, 1989.
%
\bibitem{Na}
H. Nakajima.
Lectures on Hilbert schemes of points on surfaces.
University Lecture Series 18, AMS, Providence, Rhode Island, 1999.
%
\bibitem{LV}
D. Luna, Th. Vust. Plongements d'espaces homog\'enes.
Comment. Math. Helvetici 58 (1983), no.~2, 186--245.
%
\bibitem{Ma}
G. Mazzola. Generic finite schemes and Hochschild cocycles.
Comment. Math. Helvetici 55 (1980), 267--293.
%
\bibitem{VP}
V.L. Popov, E.B. Vinberg. Invariant Theory.
Encyclopaedia Math. Sciences 55, 123--278, Springer Verlag,
Berlin, Heidelberg, 1994.
%
\bibitem{Sha}
E.V. Sharoiko. Hassett-Tschinkel correspondence and automorphisms of a quadric.
Mat. Sb. 200:11 (2009), 145--160 (Russian);
English transl.: Sbornik: Math. 200 (2009), no.~11, 1715--1729.
%
\bibitem{ST}
D.A. Suprunenko, R.I. Tyshkevich. Commutative matrices. Academic Press, New York, 1969.
%
\bibitem{Ti}
D.A. Timashev. Homogeneous spaces and equivariant embeddings.
Encyclopaedia Math. Sciences 138, Springer-Verlag,
Berlin, Heidelberg, 2011.
%
\end{thebibliography}
\end{document}